\newtheorem{thm}{Theorem}[section]
\newtheorem{lemma}[thm]{Lemma}
\newtheorem{prop}[thm]{Proposition}
\newtheorem{coroll}[thm]{Corollary}
\theoremstyle{definition}
\newtheorem{rem}[thm]{Remark}
\newtheorem*{acknow}{Acknowledgements}
\newcommand{\R}{{\mathbb{R}}}
\newcommand{\T}{{\mathbb{T}}}
\newcommand{\Z}{{\mathbb{Z}}}
\newcommand{\N}{{\mathbb{N}}}
\newcommand{\C}{{\mathbb{C}}}
\newcommand{\HH}{{\mathbb{H}}}
\newcommand{\K}{{\mathbb{K}}}
\newcommand{\cL}{{\mathcal{L}}}
\newcommand{\mft}{{\mathfrak{t}}}
\newcommand{\mfk}{{\mathfrak{k}}}
\newcommand{\mfh}{{\mathfrak{h}}}
\newcommand{\mfg}{{\mathfrak{g}}}
\newcommand{\mfu}{{\mathfrak{u}}}
\newcommand{\mfsu}{{\mathfrak{su}}}
\newcommand{\mfpu}{{\mathfrak{pu}}}
\newcommand{\fc}{{:\ }}
\newcommand{\ol}{\overline}
\newcommand{\wt}{\widetilde}
\newcommand{\wh}{\widehat}
\DeclareMathOperator{\id}{id}
\DeclareMathOperator{\Gr}{G}
\DeclareMathOperator{\PU}{PU}
\DeclareMathOperator{\Uni}{U}
\DeclareMathOperator{\SU}{SU}
\DeclareMathOperator{\SO}{SO}
\DeclareMathOperator{\PGL}{PGL}
\DeclareMathOperator{\Stie}{V}
\DeclareMathOperator{\PV}{PV}
\DeclareMathOperator{\pr}{pr}
\DeclareMathOperator{\pt}{pt}
\DeclareMathOperator{\tr}{tr}
\DeclareMathOperator{\supp}{supp}
\DeclareMathOperator{\Ham}{Ham}
\DeclareMathOperator{\Cont}{Cont}
\DeclareMathOperator{\Diff}{Diff}
\DeclareMathOperator{\Lie}{Lie}
\DeclareMathOperator{\diag}{diag}
\DeclareMathOperator{\std}{std}
\title
{Quasi-morphisms on contactomorphism groups and Grassmannians of $2$-planes}
\author{Frol Zapolsky}
\date{}
\begin{document}

\renewcommand{\labelenumi}{(\roman{enumi})}

\maketitle

\begin{abstract}We construct a natural prequantization space over a monotone product of a toric manifold and an arbitrary number of complex Grassmannians of $2$-planes in even-dimensional complex spaces, and prove that the universal cover of the identity component of the contactomorphism group of its total space carries a nonzero homogeneous quasi-morphism. The construction uses Givental's nonlinear Maslov index and a reduction theorem for quasi-morphisms on contactomorphism groups previously established together with M.\ S.\ Borman. We explore applications to metrics on this group and to symplectic and contact rigidity. In particular we obtain a new proof that the quaternionic projective space $\HH P^{n-1}$, naturally embedded in the Grassmannian $\Gr_2(\C^{2n})$ as a Lagrangian, cannot be displaced from the real part $\Gr_2(\R^{2n})$ by a Hamiltonian isotopy.
\end{abstract}

\section{Introduction and results}

This paper is about quasi-morphisms on contactomorphism groups and their applications. We construct a natural prequantization space $V$ over a monotone symplectic manifold of the form
$$M = N \times \prod_{i=1}^{r}\Gr_2(\C^{2n_i})$$
for $n_1,\dots,n_r \geq 2$. Here $\Gr_k(\C^n)$ is the complex Grassmannian of $k$-planes in $\C^n$ while $N$ is a closed symplectic toric manifold which is even, a condition on it moment polytope defined below. This $V$ carries a natural contact structure, and the main result is the existence of a quasi-morphism on $\wt\Cont_0(V)$, the universal cover of the identity component of its contactomorphism group. The quasi-morphisms we construct have some interesting and useful properties. As a consequence we deduce the orderability of $V$, as well as the existence and unboundedness of natural metrics on $\wt\Cont_0(V)$. We also establish rigidity results for subsets of $V$ and $M$.

\subsection{Basic notions and the results}

Let us now pass to precise definitions and formulations. All the material presented here can be found in \cite[Section 1.1]{Borman_Zapolsky_Qms_Cont_rigidity}. A homogeneous quasi-morphism on a group $\Gamma$ is a function $\mu \fc \Gamma \to \R$ satisfying
\begin{equation}\label{eqn:defin_qm}
\sup_{a,b\in\Gamma}|\mu(ab) - \mu(a) - \mu(b)| < \infty\quad\text{and}\quad\mu(a^k) = k\mu(a)\text{ for }a\in \Gamma\,, k \in \Z\,.
\end{equation}
In this paper by a quasi-morphism we mean a nonzero homogeneous quasi-morphism. See \cite{Calegari_scl} for basics on quasi-morphisms.

A contact manifold is a pair $(V,\xi)$ where $V$ is an odd-dimensional manifold while $\xi$ is a contact structure on $V$, that is a maximally nonintegrable hyperplane distribution. For us $V$ is always closed and connected, while $\xi$ is cooriented, meaning that the line bundle $TV/\xi$ is given an orientation. The orientability of this line bundle is equivalent to the existence of a $1$-form $\alpha$ such that $\xi = \ker \alpha$. Such $\alpha$ is a contact form, that is $\alpha\wedge d\alpha^{\frac 1 2(\dim V - 1)}$ is a volume form. We let $\Cont_0(V,\xi)$ be the identity component of the group of contactomorphisms of $(V,\xi)$, that is diffeomorphisms preserving $\xi$. We denote by $\wt\Cont_0(V,\xi)$ the universal cover of $\Cont_0(V,\xi)$. See \cite{Geiges_Intro_ct_topology} for foundations of contact geometry.

The main example of contact manifolds for us is given by prequantization bundles, also known as Boothby--Wang manifolds \cite{Boothby_Wang_Contact_mfds}. A prequantization bundle (or space) is a quintuple $(V,\alpha,\pi,M,\omega)$ where $(M,\omega)$ is a symplectic manifold, $\pi\fc V \to M$ is a principal $S^1$-bundle with an $S^1$-invariant connection form $\alpha \in \Omega^1(V)$ satisfying $d\alpha = \pi^*\omega$. In this case $\alpha$ is a contact form, which means that $V$ acquires a natural cooriented contact structure $\xi = \ker\alpha$. Whenever we speak about prequantization spaces, we assume that the total space is a contact manifold with this cooriented contact structure.

The quasi-morphisms we construct are linked to a natural binary relation on $\wt\Cont_0(V,\xi)$, due to Eliashberg--Polterovich \cite{Eliashberg_Polterovich_Partially_ordered_grps_geom_cont_transfs}. Namely, we write $\wt\phi \preceq \wt\psi$ for $\wt\phi,\wt\psi \in \wt\Cont_0(V,\xi)$ if $\wt\phi^{-1}\wt\psi$ can be represented by a contact isotopy starting at $\id_V$ whose generating contact vector field is everywhere nonnegatively directed relative to $\xi$, which makes sense since $\xi$ is cooriented. One easily checks that $\preceq$ is a reflexive and transitive biinvariant relation. If it is also symmetric, it is a partial order, and then $(V,\xi)$ is said to be orderable \cite{Eliashberg_Polterovich_Partially_ordered_grps_geom_cont_transfs}, \cite{Eliashberg_Kim_Polterovich_Geom_cont_transfs_dom_orderability}. We call a quasi-morphism $\mu \fc \wt\Cont_0(V,\xi) \to \R$ monotone if $\mu(\wt\phi) \leq \mu(\wt\psi)$ whenever $\wt\phi \preceq \wt\psi$.

Given a coorienting contact form $\alpha$ for $\xi$, there is a natural bijection between contact isotopies starting at $\id_V$ and smooth functions $h \fc [0,1] \times V \to \R$, called contact Hamiltonians. Namely, a contact isotopy generated by a time-dependent contact vector field $X^t$ corresponds to the Hamiltonian $\alpha(X^t)$. For a contact Hamiltonian $h$, we let $\wt\phi_h\in\wt\Cont_0(V,\xi)$ be the class of the path given by the contact isotopy corresponding to $h$. The contact vector field corresponding to the contact Hamiltonian $1$ is called the Reeb vector field of $\alpha$, and it is denoted by $R_\alpha$. It is determined by the equations $\alpha(R_\alpha) = 1$ and $\iota_{R_\alpha}d\alpha = 0$. Note that if $V$ is the total space of a prequantization bundle and $\alpha$ is the connection form, $R_\alpha$ is the infinitesimal generator of the $S^1$-action.

We call a quasi-morphism $\mu \fc \wt\Cont_0(V,\xi) \to \R$ continuous if $\mu(\wt\phi_{h^{(k)}}) \to \mu(\wt\phi_h)$ whenever $h^{(k)}$ is a sequence of contact Hamiltonians $C^0$ converging to $h$.

We call a subset $S \subset V$ displaceable if there is $\psi \in \Cont_0(V,\xi)$ such that $\psi(S) \cap \ol S = \varnothing$, where $\ol S$ is the closure of $S$. A quasi-morphism $\mu \fc \wt\Cont_0(V,\xi) \to \R$ is said to have the vanishing property if $\mu(\wt\phi_h) = 0$ whenever $h$ is a contact Hamiltonian for which there exists a displaceable subset $U \subset V$ with $\supp h_t \subset U$ for all $t$.

A symplectic toric manifold is a symplectic manifold $(M,\omega)$ endowed with an effective Hamiltonian action of a torus $\T$ of maximal possible dimension $\frac 1 2 \dim M$ \cite{Audin_Torus_actions_sympl_mfds}. Such an action is given in terms of a moment map $\Phi_\T \fc M \to \mft^*$ where $\mft = \Lie (\T)$, meaning the infinitesimal action of $X \in \mft$ is the Hamiltonian vector field of $\langle \Phi_\T(\cdot),X\rangle \in C^\infty(M)$. The image $\Delta:= \Phi_\T(M) \subset \mft^*$ is a polytope whose top-dimensional faces have conormals $\nu_1,\dots,\nu_d$, which are primitive vectors in the integer lattice $\mft_\Z:=\ker(\exp\fc \mft \to \T)$. We refer the reader to \cite{Audin_Torus_actions_sympl_mfds} for background on toric manifolds.

We call $\Delta$ and $M$ even if
$$\sum_{j=1}^{d}\nu_j \in 2\mft_\Z \subset \mft_\Z\,.$$
This notion was introduced in \cite{Borman_Zapolsky_Qms_Cont_rigidity}. See Section 1.6 \emph{ibid.}\ for examples of even and non-even toric manifolds. Finally, we say that a symplectic manifold $(M,\omega)$ is monotone if its first Chern class and $\omega$ are positively proportional as functionals on $H_2(M;\Z)$.

Here is the main result, whose proof occupies Sections \ref{ss:contruction_preq_bdle}, \ref{ss:constructing_qm}.
\begin{thm}\label{thm:main_result}Given a closed connected even monotone symplectic toric manifold $N$ and integers $r \geq 0$, $n_1,\dots,n_r \geq 2$, there is a prequantization bundle with total space $V$ and contact structure $\xi$ over the monotone product
$$N\times\prod_{i=1}^{r}\Gr_2(\C^{2n_i})\,,$$
and a monotone continuous quasi-morphism $\mu\fc\wt\Cont_0(V,\xi) \to \R$ having the vanishing property.
\end{thm}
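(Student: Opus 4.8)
The plan is to build $V$ and $\mu$ by combining two inputs: Givental's nonlinear Maslov index on the prequantization space of complex projective space, and the reduction theorem for quasi-morphisms on contactomorphism groups from the joint work with M.~S.~Borman. First I would recall that $\Gr_2(\C^{2n})$ embeds symplectically (as a symplectic reduction, or via the Plücker embedding followed by reduction) inside a projective space, or more precisely that the relevant prequantization circle bundle over $\Gr_2(\C^{2n})$ arises as a contact reduction of the standard contact sphere $S^{2m-1}$ for suitable $m$; the same holds for the even toric manifold $N$, for which the hypothesis $\sum_j \nu_j \in 2\mft_\Z$ is precisely what guarantees the existence of the appropriate line bundle whose unit circle bundle is a lens-space-type contact reduction of a sphere. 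Taking the fibered product over the factors, one obtains a prequantization bundle $(V,\alpha,\pi)$ over $M = N \times \prod_i \Gr_2(\C^{2n_i})$, and one must check that $M$ is monotone with the product monotonicity constant arranged to match (rescaling the symplectic forms on the factors), so that $V$ is genuinely a Boothby--Wang manifold in the sense defined above.

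Next I would transport Givental's quasi-morphism. On $\wt\Cont_0(S^{2m-1},\xi_{\mathrm{std}})$ the nonlinear Maslov index furnishes a monotone, continuous quasi-morphism with the vanishing property; this is classical. The reduction theorem of Borman--Zapolsky then says, roughly, that a quasi-morphism on the contactomorphism group of the ambient contact manifold descends to a quasi-morphism on the contactomorphism group of a contact reduction, provided a certain compatibility (stability under the relevant Hamiltonians, and the reduction being of the "prequantization" type) holds, and it preserves monotonicity, continuity, and the vanishing property. So I would verify the hypotheses of that theorem for each reduction step and then apply it, obtaining $\mu$ on $\wt\Cont_0(V,\xi)$. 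The nonvanishing of $\mu$ must be checked separately: one exhibits an explicit loop or path of contactomorphisms — typically the Reeb flow, i.e. the $S^1$-action on $V$, run for the appropriate time — on which Givental's quasi-morphism, hence its reduction, is nonzero; this uses that the Reeb flow lifts compatibly through the reductions and that the homogeneous quasi-morphism detects it, essentially because the total space is not a sphere of the "forbidden" parity.

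The main obstacle I expect is the bookkeeping in the reduction step: one must set up the contact reductions so that the prequantization structure is preserved at every stage, track the monotonicity constants so that the product is monotone with the right normalization, and check that the hypotheses of the Borman--Zapolsky reduction theorem — which presumably require controlling how displaceable subsets downstairs pull back to (stably) displaceable subsets upstairs, and how the coorientation and the partial order behave under reduction — are genuinely met for $\Gr_2(\C^{2n})$ and for even toric $N$ simultaneously. The restriction $n_i \geq 2$ and the evenness of $N$ should enter exactly here, ensuring the reductions exist and are of the correct type; the "even" condition is what makes the toric factor carry a prequantization bundle that is a quotient of a sphere rather than merely of some other contact manifold. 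Once the reduction is correctly set up, monotonicity, continuity, and the vanishing property are inherited formally, and only nonvanishing requires the separate Reeb-flow computation.
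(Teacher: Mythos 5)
The proposal has the right high-level skeleton (build $V$ by contact reduction from a ``sphere-like'' model, reduce Givental's quasi-morphism via the Borman--Zapolsky reduction theorem), but it misses the two places where the argument is actually nontrivial, and one of its premises is wrong.

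First, you start from the nonlinear Maslov index on $\wt\Cont_0(S^{2m-1},\xi_{\std})$, but no monotone quasi-morphism exists there: the standard contact sphere is \emph{not orderable} (Eliashberg--Kim--Polterovich), and a nonzero monotone quasi-morphism would force orderability. Givental's quasi-morphism lives on $\wt\Cont_0(\R P^{2D-1})$, and this is precisely why the paper must perform the contact reduction from $\R P^{2D-1}_\Gamma$ rather than from the sphere, and why the evenness hypothesis on $N$ is needed: evenness guarantees $-I\in\K\subset G$, so that $\Z_2=\{I,-I\}\subset G$ and the reduction can be carried out at the level of the projective space via the group $G_0^\tau$. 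In your account the evenness only ``ensures the reductions exist and are of the correct type,'' which does not capture this.

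Second, and more importantly, you identify the hard part as ``bookkeeping'' and tracking displaceable sets through reduction, whereas the genuine crux is the hypothesis of the reduction theorem that the reduction locus $Y\subset\wh V$ be $\wh\mu$-\emph{subheavy}. This is not formal. The paper proves it by pushing down to the base: via the correspondence between $\wh\mu$-subheavy sets in $\wh V$ and $\wh\zeta$-superheavy sets in $\wh M=N\times\prod_i\C P^{4n_i-1}$, one must show that $X=N\times\prod_i\PV_2(\C^{2n_i})$ is $\wh\zeta$-superheavy. Since the already-known superheavy set in $\wh M$ is the real part $K=N_\R\times\prod_i\R P^{4n_i-1}$, one needs a Hamiltonian diffeomorphism carrying $K$ into $X$, i.e.\ carrying $\R P^{4n-1}$ into the projectivized Stiefel variety $\PV_2(\C^{2n})$. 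This is exactly where the quaternionic structure enters, through the map $b\fc\HH\to\HH\times\HH$, $q\mapsto\tfrac1{\sqrt2}(q,jq)$, whose $\C$-linear extension $b_\C$ is unitary and maps $S^{4n-1}\subset\R^{4n}$ into $\Stie_2(\C^{2n})$. This is the only reason the theorem is stated for $\Gr_2(\C^{2n})$ with $2n$ even and cannot, by this method, be pushed to general Grassmannians --- a point your proposal does not engage with. Finally, the separate ``nonvanishing via the Reeb flow'' check you propose is unnecessary: the reduction theorem already produces a (nonzero) monotone quasi-morphism agreeing with $\mu_{\text{Giv}}$ on lifts, so nonvanishing is built in.
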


\begin{rem}\label{rem:after_main_result}
\begin{enumerate}
  \item The case $r=0$ is proved in \cite{Borman_Zapolsky_Qms_Cont_rigidity} and is included here for completeness. The present theorem may be viewed as an extension of the results presented \emph{ibid.}
  \item The prequantization bundle is naturally constructed from the moment polytope of $N$.
  \item The specific form of the base is dictated by the necessity to prove rigidity of certain subsets participating in the construction of $\mu$, and cannot be improved, for instance to more general Grassmannians, using the methods presented here. See Section \ref{ss:constructing_qm} for a more detailed explanation. Briefly, we must use the fact that $\C^{2n_i}$ is identifiable with the quaternionic space $\HH^{n_i}$ together with its module structure over $\HH$.
\end{enumerate}
\end{rem}

The mere existence of such a quasi-morphism has numerous consequences. First, \cite[Theorem 1.28]{Borman_Zapolsky_Qms_Cont_rigidity}, based on the results in \cite{Eliashberg_Polterovich_Partially_ordered_grps_geom_cont_transfs}, implies the following fact.
\begin{coroll}\label{coroll:preq_bdles_orderable}
The contact manifold $(V,\xi)$ in Theorem \ref{thm:main_result} is orderable. \qed
\end{coroll}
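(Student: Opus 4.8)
The plan is to deduce Corollary~\ref{coroll:preq_bdles_orderable} directly from Theorem~\ref{thm:main_result} by invoking the general principle, going back to Eliashberg--Polterovich \cite{Eliashberg_Polterovich_Partially_ordered_grps_geom_cont_transfs} and packaged as \cite[Theorem 1.28]{Borman_Zapolsky_Qms_Cont_rigidity}, that a contact manifold admitting a monotone quasi-morphism on the universal cover of the identity component of its contactomorphism group is orderable. So the proof is essentially a one-line citation, but let me record why the implication holds. Recall that $(V,\xi)$ fails to be orderable precisely when the relation $\preceq$ is not antisymmetric, equivalently when there is a positive contractible loop of contactomorphisms; in terms of $\wt\Cont_0(V,\xi)$ this means there exists $\wt\phi \neq \wt{\id}$ with both $\wt{\id} \preceq \wt\phi$ and $\wt\phi \preceq \wt{\id}$, and then by biinvariance and transitivity one in fact gets $\wt{\id} \preceq \wt\phi^k \preceq \wt{\id}$ for all $k \in \Z$, so the cyclic subgroup generated by $\wt\phi$ is ``squeezed'' between $\wt{\id}$ and itself under $\preceq$.

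Now suppose $\mu \fc \wt\Cont_0(V,\xi) \to \R$ is a monotone quasi-morphism as furnished by Theorem~\ref{thm:main_result}. Applying monotonicity to $\wt{\id} \preceq \wt\phi^k$ and to $\wt\phi^k \preceq \wt{\id}$ gives $\mu(\wt{\id}) \leq \mu(\wt\phi^k) \leq \mu(\wt{\id})$, hence $\mu(\wt\phi^k) = \mu(\wt{\id}) = 0$ for every $k$; but homogeneity gives $\mu(\wt\phi^k) = k\,\mu(\wt\phi)$, so $\mu(\wt\phi) = 0$. Thus $\mu$ vanishes on the entire subgroup of elements that are ``$\preceq$-equivalent'' to the identity. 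If $(V,\xi)$ were non-orderable, a theorem of Eliashberg--Polterovich asserts something stronger: non-orderability propagates, and in fact forces every element of $\wt\Cont_0(V,\xi)$ to be dominated in the appropriate sense, which would force $\mu \equiv 0$, contradicting the fact that by convention in this paper a quasi-morphism is nonzero. Hence $(V,\xi)$ is orderable.

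The only genuinely nontrivial input is the Eliashberg--Polterovich dichotomy relating orderability to the (non)existence of monotone quasi-morphisms, and I would not reprove it here: it is exactly the content of \cite[Theorem 1.28]{Borman_Zapolsky_Qms_Cont_rigidity}, whose hypotheses --- existence of a monotone (nonzero, homogeneous) quasi-morphism on $\wt\Cont_0(V,\xi)$ --- are met verbatim by the $\mu$ of Theorem~\ref{thm:main_result}. So the proof of the corollary consists of checking that the quasi-morphism produced above has the single property (monotonicity) required by that citation and then quoting it; there is no additional obstacle, and the \texttt{\textbackslash qed} already placed in the statement reflects that the argument is immediate. \hfill\qedsymbol\medskip
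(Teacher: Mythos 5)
Your proof is correct and takes essentially the same approach as the paper: the corollary is deduced by citing \cite[Theorem 1.28]{Borman_Zapolsky_Qms_Cont_rigidity}, which states that the existence of a nonzero monotone homogeneous quasi-morphism on $\wt\Cont_0(V,\xi)$ implies orderability. The additional sketch you give of the underlying Eliashberg--Polterovich mechanism is not in the paper but is consistent with it.
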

\noindent It is a direct consequence of the existence of a nonzero monotone homogeneous quasi-morphism on $\wt\Cont_0(V,\xi)$.

Next we consider metrics on $\wt\Cont_0(V,\xi)$. Following \cite{Burago_Ivanov_Polterovich_Conj_invt_norms_grps_geom_origin}, if $G$ is a group, a norm on $G$ is a function $|\cdot|\fc G \to \R_{\geq 0}$ such that $|g| = 0$ if and only if $g = 1$, $|gh| \leq |g| + |h|$, $|g^{-1}| = |g|$, for all $g,h \in G$. An important class is formed by so-called stably unbounded norms, where a norm $|\cdot|$ is stably unbounded if there is $g \in G$ such that $\lim_{k\to\infty}\frac{|g^k|}{k} > 0$. A norm is conjugation-invariant if in addition $|ghg^{-1}|=|h|$ for $g,h\in G$. Norms on $G$ are in bijection with right-invariant metrics on $G$: a norm $|\cdot|$ corresponds to the metric $d(g,h)=|gh^{-1}|$, which is biinvariant if and only if $|\cdot|$ is conjugation-invariant. Conjugation-invariant norms play an important role in the study of the geometry of groups, which is especially interesting when $G$ is an infinite-dimensional symmetry group, see \emph{ibid}.

In \cite{Fraser_Polterovich_Rosen_Sandon_type_metrics_Cont} the authors define a conjugation-invariant norm on $\wt\Cont_0(V,\xi)$, where $(V,\xi)$ is the total space of a prequantization bundle, which we refer to as the Fraser--Polterovich--Rosen norm. In \cite{Shelukhin_Hofer_norm_contacto} the author defines a natural family of norms on $\wt\Cont_0(V,\xi)$, which are not conjugation-invariant, for any (closed) contact manifold $(V,\xi)$; we refer to these as the Shelukhin norms. In the context of norms on groups, the quasi-morphism of Theorem \ref{thm:main_result} yields the following result, where for a quasi-morphism $\mu$ its defect is defined to be the supremum in equation \eqref{eqn:defin_qm} above.
\begin{thm}
Let $(V,\xi)$ be the total space of a prequantization bundle as in Theorem \ref{thm:main_result}, let $\mu$ be the corresponding quasi-morphism, and let $D_\mu$ be its defect.
\begin{enumerate}
  \item The function $\wt\Cont_0(V,\xi) \to \R$, $\wt\phi \mapsto |\mu(\wt\phi)|+D_\mu$ is a stably unbounded conjugation-invariant norm.
  \item The Fraser--Polterovich--Rosen norm is stably unbounded on $\wt\Cont_0(V,\xi)$.
  \item The Shelukhin norm is stably unbounded on $\wt\Cont_0(V,\xi)$.
\end{enumerate}
\end{thm}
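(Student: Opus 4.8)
The plan is to deduce all three statements from the abstract fact that $\mu$ is a nonzero homogeneous quasi-morphism on $\wt\Cont_0(V,\xi)$ which is monotone, continuous, and has the vanishing property, using only general principles about quasi-morphisms and norms together with the structural properties already isolated in Theorem \ref{thm:main_result}. For part (i), I would first verify the norm axioms for $\wt\phi\mapsto |\mu(\wt\phi)|+D_\mu$ on the set $\wt\Cont_0(V,\xi)\setminus\{\id\}$, extended by $0$ at the identity: symmetry is immediate from $\mu(\wt\phi^{-1})=-\mu(\wt\phi)$; the triangle inequality follows from $|\mu(\wt\phi\wt\psi)|\le |\mu(\wt\phi)|+|\mu(\wt\psi)|+D_\mu$ combined with the extra additive $D_\mu$ on each of the two summands, which absorbs the defect; conjugation invariance is the standard fact that any homogeneous quasi-morphism is invariant under conjugation (since $\mu(gag^{-1})=\lim_k \mu((gag^{-1})^k)/k=\lim_k\mu(ga^kg^{-1})/k$ and $|\mu(ga^kg^{-1})-\mu(a^k)|\le 2D_\mu$). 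Finally, since $\mu\not\equiv 0$ and $\mu$ is homogeneous, there is $\wt\phi$ with $\mu(\wt\phi)\ne 0$, so $\lim_k |\mu(\wt\phi^k)|/k=|\mu(\wt\phi)|>0$, giving stable unboundedness; the only mild subtlety is checking that definiteness ($|g|=0\iff g=\id$) holds, which is why we set the value to $0$ only at the identity and add $D_\mu>0$ everywhere else.

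For parts (ii) and (iii), the strategy is to bound $\mu$ from above by a constant multiple of the relevant norm, so that stable unboundedness of $\mu$ forces stable unboundedness of the norm. Concretely, for part (ii) I would recall the definition of the Fraser--Polterovich--Rosen norm: it is built, roughly, from the minimal number of ``positive'' and ``negative'' pieces (each generated by a contact Hamiltonian supported in a displaceable subset, or comparable to the Reeb flow) needed to write a given element. The monotonicity of $\mu$ controls its value on elements dominated by multiples of the Reeb flow, while the vanishing property kills the contribution of the displaceably-supported pieces; together these give an inequality $|\mu(\wt\phi)|\le C\cdot\|\wt\phi\|_{\mathrm{FPR}}$ for a universal constant $C$ depending only on $D_\mu$ and on $\mu(\wt\phi_{1})$, where $1$ is the contact Hamiltonian generating the Reeb flow. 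Stable unboundedness then transfers by taking powers of the element from part (i). For part (iii), the same scheme applies to the Shelukhin norm, using its definition via the oscillation/energy of generating contact Hamiltonians: here the key input is the continuity of $\mu$, which yields a bound of $\mu$ in terms of the $C^0$-size of a generating Hamiltonian, hence in terms of the Shelukhin norm; again stable unboundedness transfers.

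The main obstacle is part (iii), or more precisely pinning down the exact quantitative comparison between $\mu$ and each of the two norms from the literature. The abstract properties of $\mu$ (monotone, continuous, vanishing) are designed to make such comparisons work, and analogous arguments appear in the symplectic setting (Hofer-type bounds for Calabi quasi-morphisms), but one must be careful that the normalizations match: the Shelukhin norm involves a choice of contact form and its associated $C^0$-norm on Hamiltonians, and one needs $\mu$ to be Lipschitz with respect to that norm, which should follow from continuity together with homogeneity (a continuous homogeneous quasi-morphism that vanishes on small Hamiltonians is automatically Lipschitz in the $C^0$-metric, by a standard fragmentation-free scaling argument). I would isolate this Lipschitz estimate as the technical heart of (iii), state it as a short lemma, and then deduce (ii) and (iii) in parallel. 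I expect (i) to be essentially formal, (ii) to require only recalling the FPR construction and invoking monotonicity plus vanishing, and (iii) to need the extra Lipschitz lemma; none of these should be long once the properties of $\mu$ from Theorem \ref{thm:main_result} are in hand.
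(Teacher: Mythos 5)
Your plan for item (i) matches the paper's in substance: the paper dismisses it as "the elementary general fact that if $\nu$ is a homogeneous quasi-morphism with defect $D_\nu$, then $|\nu|+D_\nu$ is a stably unbounded conjugation-invariant norm," and your verification of symmetry, triangle inequality (via defect absorption), conjugation-invariance, and stable unboundedness is exactly that fact unwound. You also correctly noticed the definiteness wrinkle at the identity, which the paper silently glosses over; your fix (declare the value $0$ at the identity) is the right one.

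For items (ii) and (iii) your route diverges from the paper's. The paper does not re-derive anything: item (ii) is a direct citation to \cite[Theorem 2.15]{Fraser_Polterovich_Rosen_Sandon_type_metrics_Cont}, whose sole hypothesis is that $(V,\xi)$ is orderable, which is already supplied by Corollary \ref{coroll:preq_bdles_orderable}; item (iii) is a direct citation to \cite[Proposition 33]{Shelukhin_Hofer_norm_contacto}, whose hypothesis is the existence of a monotone homogeneous quasi-morphism on $\wt\Cont_0(V,\xi)$. Your proposal instead tries to prove comparison inequalities between $\mu$ and the two norms by hand. That is a legitimately different (and heavier) approach, and for (ii) your sketch --- monotonicity handles Reeb-dominated pieces, the vanishing property handles displaceably supported pieces --- is plausible and would in effect reprove the cited result.

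There is, however, a genuine gap in your argument for (iii). You attribute the desired bound to \emph{continuity} of $\mu$, and claim that "a continuous homogeneous quasi-morphism that vanishes on small Hamiltonians is automatically Lipschitz in the $C^0$-metric." Continuity here is purely qualitative ($C^0$-convergence of contact Hamiltonians implies convergence of the values of $\mu$) and does not by itself produce any quantitative bound; a function can be continuous, even homogeneous, without being Lipschitz, and no fragmentation-free scaling converts the one into the other. The property that actually yields the comparison $|\mu(\wt\phi_h)|\lesssim \int_0^1\max|h_t|\,dt$ is \emph{monotonicity}: for a constant $c\ge\max h_t$ one has $\wt\phi_h\preceq\wt\phi_c$, hence $\mu(\wt\phi_h)\le\mu(\wt\phi_c)=c\,\mu(\wt\phi_1)$ (up to defect terms), and similarly from below. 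This is exactly the input \cite[Proposition 33]{Shelukhin_Hofer_norm_contacto} requires, and the paper flags monotonicity, not continuity, as the relevant property. So if you want to carry out your direct comparison, isolate the Lipschitz lemma but prove it from monotonicity; continuity plays no role in either (ii) or (iii).
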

\begin{proof}
Item (i) follows from the elementary general fact that if $\nu \fc G \to \R$ is a homogeneous quasi-morphism with defect $D_\nu$, then $|\nu| + D_\nu$ is a stably unbounded conjugation-invariant norm. Item (ii) is just \cite[Theorem 2.15]{Fraser_Polterovich_Rosen_Sandon_type_metrics_Cont}, based on the fact that $(V,\xi)$ is orderable by Corollary \ref{coroll:preq_bdles_orderable}. Item (iii) follows from \cite[Proposition 33]{Shelukhin_Hofer_norm_contacto}, based on the existence of the homogeneous monotone quasi-morphism $\mu$.
\end{proof}

Our final application is to symplectic and contact rigidity. Before we formulate the results, it is instructive to see what Theorem \ref{thm:main_result} gives in the case $N = \pt$, $r=1$, that is when the base of the prequantization bundle is $\Gr_2(\C^{2n})$. As we will see in Section \ref{ss:contruction_preq_bdle}, in this case the total space $V$ fits into the diagram
$$S^{8n-1} \supset \Stie_2(\C^{2n})\xrightarrow{\wh\chi}V = \Stie_2(\C^{2n})/\SU(2)\,,$$
where $S^{8n-1} \subset \C^{4n}$ is the sphere of radius $\sqrt 2$, $\Stie_2(\C^{2n}) \subset (\C^{2n})^2 = \C^{4n}$ is the Stiefel variety of Hermitian orthonormal $2$-frames in $\C^{2n}$, $\SU(2)$ acts on it in an obvious way, while $\wh\chi$ is the quotient projection. The contact form on $V$ is obtained by pushing the restriction of the standard contact form on $S^{8n-1}$ to $V$ by $\wh\chi$. The bundle projection is obtained as follows:
$$\pi \fc V = \Stie_2(\C^{2n})/\SU(2) \xrightarrow{S^1 = \Uni(2)/\SU(2)} \Stie_2(\C^{2n})/\Uni(2) = \Gr_2(\C^{2n}) = M\,.$$
To formulate our rigidity results, we need to construct some Legendrian\footnote{A submanifold of a contact manifold $(V,\xi)$ is Legendrian if it everywhere tangent to $\xi$ and has the maximal possible dimension $\frac 1 2 (\dim V -1)$.} submanifolds of $V$ and Lagrangian submanifolds of $M$. We define
$$L_1:=\wh\chi(\Stie_2(\R^{2n}))\,,$$
where $\Stie_2(\R^{2n})$ is the Stiefel variety of Euclidean orthonormal $2$-frames in $\R^{2n}$, naturally embedded in $\Stie_2(\C^{2n})$. Next, we identify $\C^{4n} = (\C^n)^4$ and define
$$L_2:=\wh\chi\big(\{(z,w;-\ol w,\ol z)\,|\,z,w\in\C^n\,,\|z\|^2+\|w\|^2=1)\}\big)\,.$$
This definition is related to item (iii) in Remark \ref{rem:after_main_result}, and the reader will notice that $(-\ol w,\ol z)$ is $j(z,w)$ if we identify $\C^{2n} = \HH^n$. We define
$$K_i:=\pi(L_i)\,,\quad Q_i:=\pi^{-1}(K_i)\,,\qquad i=1,2\,.$$

In the following theorem, $\pi \fc V\to M$ is the prequantization bundle we just described.
\begin{thm}\label{thm:rigidity_result}With the above notations,
\begin{enumerate}
  \item The submanifolds $L_i \subset V$ are Legendrian while $K_i\subset M$ are Lagrangian, for $i=1,2$;
  \item $L_1$ is diffeomorphic to the Grassmannian $\Gr_2^+(\R^{2n})$ of oriented real $2$-planes in $\R^{2n}$, $K_1$ to $\Gr_2(\R^{2n})$, the Grassmannian of nonoriented real $2$-planes, and $\pi|_{L_1}$ is the canonical double cover;
  \item $L_2$ is diffeomorphic to the quaternionic projective space $\HH P^{n-1}$, and $\pi|_{L_2}$ is a diffeomorphism onto $K_2$;
  \item For $i,j=1,2$, $K_i$ cannot be displaced from $K_j$ by a Hamiltonian isotopy;
  \item For $i,j=1,2$, $L_i$ cannot be displaced from $Q_j$ by a contact isotopy.
\end{enumerate}
\end{thm}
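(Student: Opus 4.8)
The plan is to obtain (i)--(iii) by explicit computation with the models of Section~\ref{ss:contruction_preq_bdle}, to reduce (iv) to (v) by a prequantization lift, and to prove (v) by feeding a hypothetical displacement into the quasi-morphism $\mu$ of Theorem~\ref{thm:main_result}.

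\emph{Parts (i)--(iii).} First I would record dimensions: $\dim\Stie_2(\C^{2n}) = 8n-4$, hence $\dim V = 8n-7$ and a Legendrian of $V$ has dimension $4n-4$, which equals $\tfrac12\dim M$ since $\dim_\R\Gr_2(\C^{2n}) = 8n-8$. Since $\dim\Stie_2(\R^{2n}) = 4n-3$ and a real orthonormal $2$-frame is preserved by an element of the $\SU(2)$-action (resp.\ $\Uni(2)$-action) only if that element lies in $\SO(2)$ (resp.\ $\mathrm O(2)$), one gets $L_1 = \Stie_2(\R^{2n})/\SO(2) = \Gr_2^+(\R^{2n})$, of dimension $4n-4$, together with $K_1 = \Stie_2(\R^{2n})/\mathrm O(2) = \Gr_2(\R^{2n})$ and $\pi|_{L_1}$ the orientation double cover. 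For $L_2$ the defining set is the sphere of frames $(v_1,jv_1)$ with $v_1$ a unit vector in $\C^{2n} = \HH^n$; a short quaternionic computation, using $j(z,w) = (-\ol w,\ol z)$, shows that an element of $\Uni(2)$ carrying one such frame to another must already lie in $\SU(2)$, that $\SU(2)$ then acts on these frames exactly as $\mathrm{Sp}(1)$ acts on $v_1$, and hence that $L_2 = S^{4n-1}/\mathrm{Sp}(1) = \HH P^{n-1}$, of dimension $4n-4$, with $\pi|_{L_2}$ injective. That $L_1,L_2$ are isotropic follows from the fact that the contact form $\alpha$ on $V$ is the pushforward under $\wh\chi$ of the restriction to $\Stie_2(\C^{2n})$ of the standard contact form $\tfrac12\sum_k(x_k\,dy_k - y_k\,dx_k)$ on $S^{8n-1}(\sqrt 2)\subset\C^{4n}$: the set $\Stie_2(\R^{2n})$ lies in the real part $S^{4n-1}(\sqrt 2)\subset\R^{4n}$ of the sphere, on which this form vanishes, while on the frames $(z,w;-\ol w,\ol z)$ the $\ol w$- and $\ol z$-contributions cancel the $w$- and $z$-contributions. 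Combined with the dimension count this makes $L_1,L_2$ Legendrian; since a Legendrian is transverse to the Reeb flow, $\pi|_{L_2}$ is moreover an immersion, hence (being injective with compact domain) an embedding onto $K_2\cong\HH P^{n-1}$. Finally $K_i = \pi(L_i)$ is Lagrangian: it has dimension $4n-4 = \tfrac12\dim M$ and is isotropic because $\pi^*\omega = d\alpha$ restricts to $d(\alpha|_{L_i}) = 0$, using that $\pi|_{L_i}$ is a local diffeomorphism.

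\emph{Part (iv) from part (v).} Any Hamiltonian isotopy $\{\psi^t\}$ of $M$ generated by $H_t$ lifts to the strict contact isotopy $\{\wt\psi^t\}$ of $(V,\xi)$ generated by the contact Hamiltonian $\pi^*H_t$, and $\pi\circ\wt\psi^t = \psi^t\circ\pi$, so $\wt\psi^t(\pi^{-1}S) = \pi^{-1}(\psi^tS)$ for every $S\subset M$. Thus if $\psi^1$ displaced $K_i$ from $K_j$, then $\wt\psi^1$ would displace $Q_i = \pi^{-1}(K_i)$ from $Q_j = \pi^{-1}(K_j)$; since $L_i\subset\pi^{-1}(\pi(L_i)) = Q_i$, this would displace $L_i$ from $Q_j$, contradicting (v). Hence (iv) follows from (v), and it suffices to prove the latter.

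\emph{Part (v).} Suppose for contradiction that a contact isotopy $\{\varphi^t\}$ of $(V,\xi)$ with $\varphi^0 = \id_V$ satisfies $\varphi^1(L_i)\cap Q_j = \varnothing$. The plan is to derive a contradiction from the monotonicity and the vanishing property of $\mu$, in the style of Entov--Polterovich rigidity as adapted to contactomorphism groups in \cite{Borman_Zapolsky_Qms_Cont_rigidity}: from $\varphi^1$ and a contact Hamiltonian cutting off near $Q_j$ one builds a nonnegative contact Hamiltonian that is positive near $L_i$ yet is generated inside a set disjoint from $L_i$, so that the vanishing property together with monotonicity of $\mu$ bounds $\mu$ of the corresponding class; playing this against a lower bound for $\mu$ on contact Hamiltonians concentrated near $L_i$ then yields the contradiction. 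Such a lower bound is exactly what the construction of $\mu$ via Givental's nonlinear Maslov index and the Borman--Zapolsky reduction theorem is designed to provide, because $L_1$ and $L_2$ are the isotropic loci that survive in the (generating-function, equivalently Morse-theoretic) complex computing that index: for $L_1 = \Gr_2^+(\R^{2n})$ this is close to the classical detection of the real part of a sphere, while for $L_2 = \HH P^{n-1}$ it is the new ingredient. The main obstacle is precisely this last point: establishing that the Legendrian $\HH P^{n-1}$, embedded through the frames $(v_1,jv_1)$, is detected. This is where the identification $\C^{2n} = \HH^n$ and the $\HH$-module structure are indispensable — the quaternionic symmetry group $\mathrm{Sp}(1)=\SU(2)$ of those frames is what carries the computation through, and its absence for more general $2$-plane Grassmannians is the reason the base of the prequantization bundle cannot be enlarged (cf.\ Remark~\ref{rem:after_main_result}(iii)). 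Granting (v) for all $i,j\in\{1,2\}$, part (iv) follows as above; in particular the case $i\neq j$ gives the non-displaceability of $\HH P^{n-1}$ from $\Gr_2(\R^{2n})$ featured in the abstract.
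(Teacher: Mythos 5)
Your parts (i)--(iii) are correct and run essentially parallel to the paper's argument, with the additional benefit that you make the dimension count and the vanishing of $\alpha_{\std}$ on $\Stie_2(\R^{2n})$ and on the image of $b$ fully explicit. Your reduction of (iv) to (v), via the lift of a Hamiltonian isotopy of $M$ to a strict contact isotopy of $V$ with the lifted Hamiltonian $\pi^*H_t$ and the inclusion $L_i\subset Q_i$, is a valid and clean alternative; the paper instead treats (iv) directly by passing to the quasi-state $\zeta$ on $M$ and noting that $K_i=\pi(L_i)$ is $\zeta$-superheavy by Proposition~\ref{prop:relation_superheavy_preq_bdle_total_sp_base}, after which any two superheavy sets must intersect and the collection of such sets is $\Ham(M,\omega)$-invariant.

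Part (v), however, contains a genuine gap which you yourself flag but do not close. The entire Entov--Polterovich-style scheme you sketch is exactly the content of the assertions ``$L_i$ is $\mu$-subheavy'' and ``$Q_j$ is $\mu$-superheavy,'' together with Proposition~\ref{prop:properties_rigid_subsets_ct_mfds}(iv); writing ``such a lower bound is exactly what the construction of $\mu$ is designed to provide'' is not an argument. The point is that Givental's index is only known to detect the real Legendrian $\R P^{4n-1}\subset\R P^{8n-1}_\Gamma$ (\cite[Lemma 1.23]{Borman_Zapolsky_Qms_Cont_rigidity}), and $L_2$ is \emph{not} the reduction of $\R P^{4n-1}$, so one must actually say something new.

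The paper's resolution is short but indispensable, and it is what you miss. The set whose reduction by $\chi$ gives $L_2$ is $b(S^{4n-1})/\Z_2\subset\R P^{8n-1}_\Gamma$, where $b$ is the map \eqref{eqn:the_map_b}. Extending $b$ by $\C$-linearity gives the unitary map $b_\C$ of \eqref{eqn:unitary_extension_of_b}, whose induced map $\ol B\in\PU(4n)$ lies in $\Cont_0(\R P^{8n-1}_\Gamma,\xi_\Gamma)$, and one checks that $b(S^{4n-1})/\Z_2=\ol B(\R P^{4n-1})$. Since $\R P^{4n-1}$ is $\mu_{\text{Giv}}$-subheavy and the collection of subheavy sets is $\Cont_0$-invariant by Proposition~\ref{prop:properties_rigid_subsets_ct_mfds}(iii), $b(S^{4n-1})/\Z_2$ is $\mu_{\text{Giv}}$-subheavy as well; Theorem~\ref{thm:rigidity_descends_by_reduction} then makes $L_2$ (and, starting from $\R P^{4n-1}$ directly, also $L_1$) $\mu$-subheavy. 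From there (v) is mechanical: $Q_i\supset L_i$ is $\mu$-subheavy by Proposition~\ref{prop:properties_rigid_subsets_ct_mfds}(i), Reeb-invariant, hence $\mu$-superheavy by \cite[Proposition 1.13]{Borman_Zapolsky_Qms_Cont_rigidity}, and therefore cannot be displaced from the $\mu$-subheavy $L_j$ by Proposition~\ref{prop:properties_rigid_subsets_ct_mfds}(iv). You correctly identify the $\HH$-module structure as the indispensable ingredient, but the decisive step it enables --- realizing the preimage of $L_2$ in $\R P^{8n-1}_\Gamma$ as a unitary image of the real Legendrian --- is absent from your proposal.
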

\noindent See Section \ref{ss:rigidity_results} for the proof. Our main contribution here is contained in item (v). The result in item (iv) was proved in \cite{Oh_FH_Lagr_intersections_hol_disks_I} for $i=j$ and in \cite{Iriyeh_Sakai_Tasaki_Lagr_HF_pair_real_forms_Hermition_symm_sp_cpt_type} for $i\neq j$, by computing the Lagrangian Floer homology $HF(K_i,K_j)$. It is included for completeness to illustrate the method of proof.

\subsection{Discussion}

Quasi-morphisms on $\wt\Cont_0(V,\xi)$ for contact manifolds $(V,\xi)$ have been constructed by
\begin{itemize}
  \item Poincar\'e with his rotation number on $\wt\Diff_0(S^1) \equiv \wt\Cont_0(S^1)$;
  \item Givental with his nonlinear Maslov index \cite{Givental_Nonlinear_gen_Maslov_index}, \cite{Ben_Simon_Nonlinear_Maslov_index_Calabi_homomorphism} for the standard contact $\R P^{2n-1}$;
  \item Borman--Zapolsky \cite{Borman_Zapolsky_Qms_Cont_rigidity} for prequantization bundles over even toric manifolds;
  \item Granja--Karshon--Pabiniak--Sandon \cite{Granja_Karshon_Pabiniak_Sandon_Maslov_index_lens_spaces} for lens spaces;
  \item Albers--Shelukhin--Zapolsky \cite{Albers_Shelukhin_Zapolsky} for lens spaces which are prequantization bundles over $\C P^n$.
\end{itemize}
The present paper is a natural continuation of the research presented in \cite{Borman_Zapolsky_Qms_Cont_rigidity}. We held hopes for a more general result, for instance including all complex Grassmannians, but in order to use the reduction technique for quasi-morphisms, Theorem \ref{thm:reduction_thm}, one must prove that the reduction locus is \emph{subheavy}, a notion defined below. This is a nontrivial task, and indeed a major hurdle one must overcome in order to employ the reduction theorem. The problem with Givental's nonlinear Maslov index is that one can prove subheaviness only for a few sets. The results of \cite{Albers_Shelukhin_Zapolsky} provide a quasi-morphism on $\wt\Cont_0$ of a lens space, which has potentially more subheavy sets. But unpublished calculations done together with Jack Smith show that even then not all complex Grassmannians will be included. The problem seems to be deep and it is far from being understood.

In general, quasi-morphisms on $\wt\Cont_0$ seem to be very hard to construct. In symplectic geometry, for example, there is now well-established technology for constructing quasi-morphisms on $\wt\Ham(M,\omega)$ where $(M,\omega)$ is a symplectic manifold, see \cite{Entov_Polterovich_Symp_QSs_semisimplicity_QH} and references therein. In contrast, all of the above constructions seem to be \emph{ad hoc}, and so far there is no unifying principle. Moreover, all of the above spaces have something to do with the standard symplectic vector space $\C^n$. It seems that for $\wt\Cont_0(V)$ the main problem is that, unlike in the symplectic case, there is no intrinsic notion of scale on a general contact manifold. All of the above examples belong to the class of prequantization bundles. This class does have an intrinsic notion of scale, given by the Reeb length of a fiber --- this is the so-called quantum scale. There are other manifestations of this scale, for example in contact (non)squeezing, see \cite{Eliashberg_Kim_Polterovich_Geom_cont_transfs_dom_orderability}. It would be very interesting to understand whether quasi-morphisms on general contactomorphism groups can be constructed, and whether there is some kind of general method of doing so, perhaps similar to \cite{Shelukhin_Action_hom_qms_mt_map_space_compatible_ACS}.

Rigidity results of Theorem \ref{thm:rigidity_result} belong to the context of rigid intersections of Legendrian (here $L_i$) and pre-Lagrangian (here $Q_i$) submanifolds. Previous results in this direction have been obtained in \cite{Eliashberg_Hofer_Salamon_Lagr_intersections_ct_geom}, and for prequantization bundles over non-aspherical monotone symplectic manifolds in \cite{Borman_Zapolsky_Qms_Cont_rigidity}. In analogy to Lagrangian Floer homology in the symplectic case, here there seems to be a unifying method, namely Lagrangian Floer homology and Legendrian contact homology in the symplectization of $V$, and the difficulties are then technical rather than conceptual.

\begin{acknow}The author was partially supported by the Israel Science Foundation grant 1825/14, and by grant 1281 from the GIF, the German--Israeli Foundation for Scientic Research and Development.
\end{acknow}

\section{Preliminaries}

The proofs of Theorems \ref{thm:main_result}, \ref{thm:rigidity_result} are based on the reduction technique developed in \cite{Borman_Zapolsky_Qms_Cont_rigidity}. To be able to use it, we need some definitions and basic results. All the material of this section is contained in Section 1 \emph{ibid}.

Let $(V,\xi)$ be a contact manifold and let $\alpha$ be a coorienting contact form for $\xi$. A submanifold $Y \subset V$ is $\alpha$-strictly coisotropic if $TY^{d\alpha} \subset TY$, where
$$T_yY^{d\alpha} = \{X\in T_yV\,|\,d\alpha(X,u)=0\text{ for all }u \in T_yY\}$$
for $y\in Y$. This is equivalent to $R_\alpha(y) \in T_yY$ for all $y \in Y$ and for the subbundle $TY\cap\xi|_Y$ being coisotropic in the symplectic vector bundle $(\xi|_Y,d\alpha)$. Now assume that $\mu \fc \wt\Cont_0(V,\xi)\to\R$ is a monotone quasi-morphism. A closed subset $Z \subset V$ is called $\mu$-subheavy if for all $h \in C^\infty(V)$ we have
$$h|_Z\equiv 0 \Rightarrow\mu(\wt\phi_h) = 0\,.$$
A closed subset $Z$ is called $\mu$-superheavy if for all $h \in C^\infty(V)$ we have
$$h|_Z>0 \Rightarrow\mu(\wt\phi_h) > 0\,.$$
This terminology was introduced in \cite{Borman_Zapolsky_Qms_Cont_rigidity} following \cite{Entov_Polterovich_Quasi_states_symplectic_intersections}. The property of being sub- or superheavy is independent of the contact form used to connect contact Hamiltonians and contact isotopies \cite[Proposition 1.10 (i)]{Borman_Zapolsky_Qms_Cont_rigidity}.

We will now describe a geometric setting which we refer to as contact reduction. Assume that in addition to $(V,\xi = \ker \alpha)$ we have another contact manifold $(\ol V,\ol\xi)$ with a chosen coorienting contact form $\ol\alpha$. We say that $(\ol V,\ol\xi)$ is obtained from $(V,\xi)$ by contact reduction if we have the diagram
\begin{equation}\label{eqn:geom_setting_reduction}
V \supset Y \xrightarrow{\rho}\ol V\,,
\end{equation}
where $Y$ is a closed connected $\alpha$-strictly coisotropic submanifold of $V$, and $\rho \fc Y \to \ol V$ is a fiber bundle such that $\rho^*\ol\alpha = \alpha|_Y$. We can now formulate the reduction theorem.
\begin{thm}[{\cite[Theorem 1.8]{Borman_Zapolsky_Qms_Cont_rigidity}}]\label{thm:reduction_thm}In the above geometric setting \eqref{eqn:geom_setting_reduction}, if $\mu \fc\wt\Cont_0(V,\xi) \to \R$ is a monotone quasi-morphism and $Y$ is $\mu$-subheavy, then there exists a unique monotone quasi-morphism $\ol\mu\fc\wt\Cont_0(\ol V,\ol\xi) \to \R$ satisfying
$$\ol\mu(\wt\phi_{\ol h}) = \mu(\wt\phi_h)$$
for all $\ol h \in C^\infty([0,1]\times \ol V)$ and $h \in C^\infty([0,1]\times V)$ such that $h|_Y = \rho^*\ol h$. If $\mu$ is continuous or has the vanishing property, then so does $\ol\mu$. \qed
\end{thm}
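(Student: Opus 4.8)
\begin{prf}
The plan is to construct $\ol\mu$ by lifting contact Hamiltonians from $\ol V$ to $V$ across the reduction, applying $\mu$, and then checking that this descends to $\wt\Cont_0(\ol V,\ol\xi)$ and inherits every property listed. Fix once and for all a tubular neighbourhood $\cU$ of $Y$ in $V$, a smooth retraction $r\fc\cU\to Y$, and a cutoff $\chi\fc V\to[0,1]$ that equals $1$ near $Y$ and is supported in $\cU$; for a contact Hamiltonian $\ol h$ on $\ol V$ put $E(\ol h):=\chi\cdot(\rho\circ r)^*\ol h$. This $E$ is a linear extension operator with $E(\ol h)|_Y=\rho^*\ol h$; it is $C^0$-continuous, preserves non-negativity, sends the constant $1$ to $\chi$, and commutes with concatenation in time. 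Since every element of $\wt\Cont_0(\ol V,\ol\xi)$ is of the form $\wt\phi_{\ol h}$, I would set $\ol\mu(\wt\phi_{\ol h}):=\mu(\wt\phi_{E(\ol h)})$; uniqueness is then automatic, because any $\nu$ satisfying the asserted identity is forced to coincide with this formula on every element.

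The heart of the argument is to show that $\mu(\wt\phi_h)$ depends only on the class $\wt\phi_{\ol h}$ as long as $h|_Y=\rho^*\ol h$ --- in particular, independently of the extension used and of the representative $\ol h$. The geometric input, which uses $\alpha$-strict coisotropy of $Y$ together with $\rho^*\ol\alpha=\alpha|_Y$, is that any contact Hamiltonian $h$ on $V$ whose restriction $h_t|_Y$ is basic (pulled back along $\rho$) generates a contact flow preserving $Y$ and covering, via $\rho$, the contact flow on $\ol V$ of the descended Hamiltonian; indeed along $Y$ the contact vector field of $h$ is determined by $h|_Y$ modulo the characteristic distribution $\ker d\rho\subset\xi|_Y$. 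Now take extensions $h$ of $\rho^*\ol h$ and $h'$ of $\rho^*\ol h'$ with $\wt\phi_{\ol h}=\wt\phi_{\ol h'}$. Choose a smooth family $\ol h^s$ ($s\in[0,1]$, $\ol h^0=\ol h$, $\ol h^1=\ol h'$) of contact Hamiltonians realizing a homotopy rel endpoints between the generating paths of $\ol h$ and $\ol h'$ --- so that $\phi^{\ol h^s}_1$ is constant in $s$ --- and lift it to a smooth family $h^s$ with $h^s|_Y=\rho^*\ol h^s$, $h^0=h$, $h^1=h'$ (in the special case $\ol h=\ol h'$ take $\ol h^s$ constant and let $h^s$ interpolate between the two extensions).

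A square-homotopy argument applied to the two-parameter family $(s,t)\mapsto\phi^{h^s}_t$ then represents $\wt\phi_{h^0}^{-1}\wt\phi_{h^1}$ by the path $s\mapsto(\phi^{h^0}_1)^{-1}\phi^{h^s}_1$. Along $Y$ each such map covers $(\phi^{\ol h^0}_1)^{-1}\phi^{\ol h^s}_1=\id_{\ol V}$, so its $s$-derivative is tangent to the fibres of $\rho$, on which $\alpha$ vanishes; hence this path has contact Hamiltonian $\zeta$ with $\zeta_s|_Y\equiv0$. Invoking the fact --- part of the basic theory of subheavy sets, which follows from the time-independent definition together with monotonicity of $\mu$ --- that $\zeta_s|_Y\equiv0$ forces $\mu(\wt\phi_\zeta)=0$, the quasi-morphism inequality yields $|\mu(\wt\phi_h)-\mu(\wt\phi_{h'})|\le D_\mu$; running the same construction over the $n$-fold time-concatenations (whose restrictions to $Y$ are again basic, and for which $E$ still interacts well with iteration) gives, using homogeneity of $\mu$, that $|n\mu(\wt\phi_h)-n\mu(\wt\phi_{h'})|\le D_\mu$, and letting $n\to\infty$ proves equality. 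This well-definedness is the step I expect to be the main obstacle: it is precisely where $\alpha$-strict coisotropy, the fibration $\rho$, and $\mu$-subheaviness of $Y$ all come in essentially.

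With well-definedness in hand the remaining points reduce to bookkeeping with $E$. Since $E$ commutes with concatenation, $\ol\mu(\wt\phi_{\ol h_1}\wt\phi_{\ol h_2})=\mu(\wt\phi_{E(\ol h_1)}\wt\phi_{E(\ol h_2)})$, so $\ol\mu$ has defect at most $D_\mu$ and is homogeneous, hence a homogeneous quasi-morphism; it is nonzero because its value on the Reeb flow of $\ol V$ equals $\mu$'s value on the Reeb flow of $V$ (the Reeb Hamiltonian $1$ lifts under $E$ to a Hamiltonian equal to $1$ on $Y$), and a nonzero monotone homogeneous quasi-morphism is nonzero on the Reeb flow. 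Monotonicity of $\ol\mu$ follows from the criterion that a homogeneous quasi-morphism is monotone iff it is non-negative on classes generated by non-negative contact Hamiltonians, using that $E$ preserves non-negativity. Continuity descends because $E$ is $C^0$-continuous, so $\ol h^{(k)}\to\ol h$ in $C^0$ implies $E(\ol h^{(k)})\to E(\ol h)$ in $C^0$. Finally, if $W\subset\ol V$ is displaceable and $\supp\ol h\subset W$, lift a contactomorphism displacing $W$ to one of $V$: along $Y$ it covers such a contactomorphism of $\ol V$, hence displaces $\rho^{-1}(\ol W)$ and therefore a small neighbourhood of it in $V$; choosing an extension of $\rho^*\ol h$ supported in such a neighbourhood and applying the vanishing property of $\mu$ gives $\ol\mu(\wt\phi_{\ol h})=0$.
\end{prf}
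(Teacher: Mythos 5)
The paper does not actually prove this statement: it is quoted verbatim from \cite[Theorem 1.8]{Borman_Zapolsky_Qms_Cont_rigidity} and marked with a \qedsymbol, so there is no in-paper proof to compare against. I will therefore assess your argument on its own terms.

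Your overall strategy --- define $\ol\mu$ via an extension operator $E$, establish well-definedness by a square-homotopy argument over a family $\ol h^s$ realising $\wt\phi_{\ol h}=\wt\phi_{\ol h'}$, extract a Hamiltonian $\zeta_s$ that vanishes on $Y$, kill it by subheaviness, and then pass the remaining properties through $E$ --- is the right skeleton, and I believe it is essentially the route the cited paper takes. The square-homotopy identification of $\wt\phi_{h^0}^{-1}\wt\phi_{h^1}$ with the right-hand boundary path, the tangency of the $s$-derivative to $\ker d\rho\subset\ker\alpha|_Y$, the lift of a homotopy $\ol h^s$ to a family $h^s$ via $h^s=E(\ol h^s)+(1-s)(h-E(\ol h))+s(h'-E(\ol h'))$, and the $n$-fold concatenation trick to upgrade $|\mu(\wt\phi_h)-\mu(\wt\phi_{h'})|\le D_\mu$ to equality are all correct.

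The one place where you wave rather than argue, and which is the real pivot of the proof, is the assertion that $\zeta_s|_Y\equiv0$ forces $\mu(\wt\phi_\zeta)=0$. The definition of $\mu$-subheaviness in this paper is stated for \emph{autonomous} $h\in C^\infty(V)$; your $\zeta$ is genuinely time-dependent, and the extension is not automatic (a naive argument would only give $|\mu(\wt\phi_\zeta)|\lesssim D_\mu$, which after your iteration would \emph{not} collapse). The extension is true, but needs an argument along these lines: given $\zeta$ with $\zeta_s|_Y=0$, for any $\epsilon>0$ choose smooth autonomous $g_\pm$ with $g_-\le\zeta_s\le g_+$ for all $s$ and $g_+|_Y\equiv\epsilon$, $g_-|_Y\equiv-\epsilon$ (this is where the smoothing of $\max_s\zeta_s$ and $\min_s\zeta_s$ uses the $\epsilon$-slack); then $g_\pm\mp\epsilon$ vanishes on $Y$, so $\mu(\wt\phi_{g_\pm\mp\epsilon})=0$ by subheaviness, and one deduces $\mu(\wt\phi_{g_\pm})=\pm\epsilon\,\mu(\wt\phi_1)$; monotonicity gives $-\epsilon\,\mu(\wt\phi_1)\le\mu(\wt\phi_\zeta)\le\epsilon\,\mu(\wt\phi_1)$, and letting $\epsilon\to0$ kills $\mu(\wt\phi_\zeta)$. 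Without this lemma the well-definedness step, and hence the entire construction, is unfinished; please spell it out.

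Two smaller remarks. First, the claim that ``$E$ commutes with concatenation'' and hence $\ol\mu(\wt\phi_{\ol h_1}\wt\phi_{\ol h_2})=\mu(\wt\phi_{E(\ol h_1)}\wt\phi_{E(\ol h_2)})$ is a slight misstatement: the product $\wt\phi_{\ol h_1}\wt\phi_{\ol h_2}$ is not generated by a naive concatenation of $\ol h_1$ and $\ol h_2$, but by the composition Hamiltonian involving conformal factors. What is actually true, and suffices, is that $\wt\phi_{E(\ol h_1)}\wt\phi_{E(\ol h_2)}$ is \emph{some} valid lift of $\wt\phi_{\ol h_1}\wt\phi_{\ol h_2}$ (because flows of $Y$-basic Hamiltonians preserve $Y$, descend to $\ol V$, and the conformal factors are compatible with $\rho$), after which well-definedness hands you the equality; so the conclusion is right, but the justification should be rephrased. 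Second, for nonvanishing of $\ol\mu$ you should be explicit that $\ol\mu(\wt\phi_{\ol 1})=\mu(\wt\phi_\chi)$ where $\chi=E(1)$ is \emph{not} the Reeb Hamiltonian of $V$, and that $\mu(\wt\phi_\chi)=\mu(\wt\phi_1)>0$ follows from $\chi|_Y\equiv1$ and the subheaviness estimate $\min_Y G\,\mu(\wt\phi_1)\le\mu(\wt\phi_G)\le\max_Y G\,\mu(\wt\phi_1)$; you seem aware of this, but the phrase ``equals $\mu$'s value on the Reeb flow of $V$'' could mislead a reader into thinking the lift is literally the constant $1$ on $V$.
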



Next we list properties of sub- and superheavy sets:
\begin{prop}[{\cite[Proposition 1.10, Theorem 1.11]{Borman_Zapolsky_Qms_Cont_rigidity}}]\label{prop:properties_rigid_subsets_ct_mfds} Let $\mu \fc \wt\Cont_0(V,\xi) \to\R$ be a monotone quasi-morphism.
\begin{enumerate}
  \item If $Y,Z \subset V$ are closed subsets such that $Y \subset Z$ and $Y$ is $\mu$-subheavy or $\mu$-superheavy, then so is $Z$;
  \item A $\mu$-superheavy set is $\mu$-subheavy;
  \item The collections of $\mu$-subheavy and $\mu$-superheavy sets are invariant by the action of $\Cont_0(V,\xi)$;
  \item Every $\mu$-subheavy set intersects every $\mu$-superheavy set. \qed
\end{enumerate}
\end{prop}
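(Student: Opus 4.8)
The plan is to establish the four items separately, saving the genuine content for item~(ii). Throughout, write $D_\mu$ for the defect of $\mu$, recall that a homogeneous quasi-morphism satisfies $\mu(g^{-1})=-\mu(g)$ and is invariant under conjugation, and use two basic facts from \cite{Eliashberg_Polterovich_Partially_ordered_grps_geom_cont_transfs}, \cite{Borman_Zapolsky_Qms_Cont_rigidity}: if contact Hamiltonians satisfy $h_t\le k_t$ pointwise then $\wt\phi_h\preceq\wt\phi_k$, and a constant Hamiltonian $c$ generates a reparametrized Reeb flow, so that $\mu(\wt\phi_c)=c\,c_0$ where $c_0:=\mu(\wt\phi_1)\ge 0$ (nonnegativity since $\wt\phi_{-1}=\wt\phi_1^{-1}\preceq\wt\phi_1$). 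Combining these with monotonicity yields the crude bound $|\mu(\wt\phi_k)|\le\|k\|_{C^0}\,c_0$ for every contact Hamiltonian $k$, as $-\|k\|_{C^0}\le k_t\le\|k\|_{C^0}$.

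Item~(i) is immediate: if $h\in C^\infty(V)$ vanishes, resp.\ is positive, on $Z$, it does so on $Y\subset Z$, so the hypothesis on $Y$ forces $\mu(\wt\phi_h)=0$, resp.\ $>0$. For item~(iv), suppose a $\mu$-subheavy $Y$ and a $\mu$-superheavy $Z$ were disjoint; since $V$ is compact Hausdorff, choose $h\in C^\infty(V)$ with $h\ge0$, $h\equiv0$ near $Y$, and $h>0$ elsewhere, in particular on $Z$. Then subheaviness of $Y$ gives $\mu(\wt\phi_h)=0$ while superheaviness of $Z$ gives $\mu(\wt\phi_h)>0$, a contradiction. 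For item~(iii), let $\psi\in\Cont_0(V,\xi)$ have conformal factor $g$, i.e.\ $\psi^*\alpha=e^{g}\alpha$; conjugation by a lift of $\psi$ is an inner automorphism of $\wt\Cont_0(V,\xi)$, and computing the contact Hamiltonian of $\psi_*X_h$ shows it sends $\wt\phi_h$ to $\wt\phi_{(e^{g}h)\circ\psi^{-1}}$. Given $Z$, say, $\mu$-subheavy and $\tilde h\in C^\infty(V)$ with $\tilde h|_{\psi(Z)}\equiv0$, I would set $h:=e^{-g}(\tilde h\circ\psi)$; then $h|_Z\equiv0$, so $\mu(\wt\phi_h)=0$, and $\mu(\wt\phi_{\tilde h})=\mu(\wt\phi_h)=0$ by conjugation-invariance, so $\psi(Z)$ is $\mu$-subheavy. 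The superheavy case is identical with $\equiv0$ replaced by $>0$, using $e^{-g}>0$.

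The heart is item~(ii), which I would prove in the spirit of the Entov--Polterovich argument for the symplectic quasi-state. Let $Z$ be $\mu$-superheavy; I claim first that $|\mu(\wt\phi_g)|\le D_\mu$ for \emph{every} autonomous $g\in C^\infty(V)$ with $g|_Z\equiv0$. Fix such a $g$ and put $\Phi(s):=\mu(\wt\phi_{g+s})$, a nondecreasing function. For $s>0$ the function $g+s$ is $>0$ on $Z$ and $g-s$ is $<0$ on $Z$, so $\Phi(s)>0$ and $\Phi(-s)<0$ by superheaviness (the latter via $\wt\phi_{g-s}^{-1}$ and $\mu(\wt\phi^{-1})=-\mu(\wt\phi)$); hence $\Phi(0^+)\ge0\ge\Phi(0^-)$. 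Next, the path $t\mapsto(\phi_g^t)^{-1}\phi_{g+\ve}^t$ represents $\wt\phi_g^{-1}\wt\phi_{g+\ve}$ and, since $X_{g+\ve}-X_g=\ve R_\alpha$, is generated by the contact Hamiltonian $\ve/g_t$, where $g_t=\exp\int_0^t(R_\alpha g)\circ\phi_g^s\,ds$ is the conformal factor of $\phi_g^t$; for $t\in[0,1]$ this Hamiltonian has $C^0$-norm $\le C_g\ve$ with $C_g:=\exp\|R_\alpha g\|_{C^0}$. The quasi-morphism inequality together with the crude bound from the first paragraph gives
\[
0\le\Phi(\ve)-\Phi(0)\le\mu\big(\wt\phi_g^{-1}\wt\phi_{g+\ve}\big)+D_\mu\le C_g c_0\,\ve+D_\mu ,
\]
so $\ve\to0^+$ yields $0\le\Phi(0^+)-\Phi(0)\le D_\mu$; running the same estimate with $g-\ve$ in place of $g$ gives $0\le\Phi(0)-\Phi(0^-)\le D_\mu$. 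Hence $-D_\mu\le\Phi(0^+)-D_\mu\le\Phi(0)\le\Phi(0^-)+D_\mu\le D_\mu$, i.e.\ $|\mu(\wt\phi_g)|\le D_\mu$. Applying this with $g=Nh$ (which also vanishes on $Z$) and using homogeneity, $N\,|\mu(\wt\phi_h)|=|\mu(\wt\phi_{Nh})|\le D_\mu$ for all $N\in\N$, so $\mu(\wt\phi_h)=0$ and $Z$ is $\mu$-subheavy.

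The main obstacle is exactly item~(ii): (i), (iii), (iv) are formal once the two basic facts above are secured, whereas (ii) requires upgrading the strict-inequality definition of superheaviness to an equality, and the nontrivial point is the $g$-independent bound $|\mu(\wt\phi_g)|\le D_\mu$ — which needs the $C^0$-control of the difference flow $\wt\phi_g^{-1}\wt\phi_{g+\ve}$ via the conformal factor of $\phi_g$, followed by the rescaling $h\rightsquigarrow Nh$ to kill the residual defect. One should also verify that the auxiliary inputs — the monotonicity lemma $h\le k\Rightarrow\wt\phi_h\preceq\wt\phi_k$ and the linearity $\mu(\wt\phi_c)=c\,c_0$ on constants — hold in the generality of \cite{Borman_Zapolsky_Qms_Cont_rigidity}, which they do.
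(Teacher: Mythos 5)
Your proof is correct: items (i), (iii), (iv) are formal consequences of the definitions, conjugation-invariance of homogeneous quasi-morphisms, and the monotonicity lemma, and the substantive item (ii) is established via the Entov--Polterovich "rescale against the defect" argument, controlling the difference flow $\wt\phi_g^{-1}\wt\phi_{g+\ve}$ through the conformal factor of $\phi_g^t$ to get $|\mu(\wt\phi_g)|\le D_\mu$, then killing the defect by replacing $g$ with $Ng$. The paper itself cites this proposition from \cite{Borman_Zapolsky_Qms_Cont_rigidity} without reproducing the proof, but your argument is essentially the one given there, and all the auxiliary inputs you invoke (the monotonicity lemma $h\le k\Rightarrow\wt\phi_h\preceq\wt\phi_k$, the formula $\wt\phi_{-h}=\wt\phi_h^{-1}$ for autonomous $h$, linearity on constants, and the conformal-factor computation for the difference flow) are standard and correctly applied.
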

\noindent One of useful corollaries of this is that a subheavy set cannot be displaced from a superheavy set by a contact isotopy, and in particular a superheavy set is nondisplaceable, see \cite[Corollary 1.12]{Borman_Zapolsky_Qms_Cont_rigidity}. This is the foundation of the rigidity results in Theorem \ref{thm:rigidity_result}.

The properties of being sub- or superheavy are preserved under contact reduction. More precisely, in the above setting we have:
\begin{thm}[{\cite[Theorem 1.14]{Borman_Zapolsky_Qms_Cont_rigidity}}]\label{thm:rigidity_descends_by_reduction} Let $\mu \fc \wt\Cont_0(V,\xi) \to \R$ be a monotone quasi-morphism and let $\ol\mu \fc \wt\Cont_0(\ol V,\ol\xi) \to \R$ be obtained from $\mu$ by reduction as in Theorem \ref{thm:reduction_thm}. If $Z \subset V$ is $\mu$-subheavy (superheavy) then $\rho(Z \cap Y)$ is $\ol\mu$-subheavy (superheavy). \qed
\end{thm}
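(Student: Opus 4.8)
The plan is to handle the superheavy and the subheavy cases separately, in both cases reducing everything to the identity $\ol\mu(\wt\phi_{\ol h})=\mu(\wt\phi_h)$ of Theorem~\ref{thm:reduction_thm}, valid whenever $h|_Y=\rho^*\ol h$; in particular $\mu(\wt\phi_h)$ depends only on the $\rho$-basic function $h|_Y$. A preliminary observation: $Z\cap Y$ is compact, being closed in the compact manifold $V$, so $\ol Z:=\rho(Z\cap Y)$ is compact, hence closed, and the statement makes sense. I shall use freely the elementary facts that $f_1\le f_2$ pointwise implies $\wt\phi_{f_1}\preceq\wt\phi_{f_2}$; that $\wt\phi_{-f}=\wt\phi_f^{-1}$ and $\wt\phi_{kf}=\wt\phi_f^k$ for autonomous $f$ and $k\in\Z$; that $\ol\mu(\wt\phi_s)=s\,\ol\mu(\wt\phi_1)$ along the Reeb one-parameter subgroup (a homogeneous quasi-morphism restricts to a homomorphism there, and monotonicity makes it bounded on $[0,1]$, hence linear); and the composition rule $\wt\phi_f\wt\phi_g=\wt\phi_{f\#g}$, which for a constant $g\equiv a$ reads $(f\#a)_t=f_t+a\,c^f_t$ with $c^f_t$ a positive function satisfying $C_f^{-1}\le c^f_t\le C_f$, $C_f\le\exp\big(\sup_t\|R_\alpha\cdot f_t\|_{C^0}\big)$.

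\emph{Superheavy.} Assume $Z$ is $\mu$-superheavy and $\ol h|_{\ol Z}>0$. By compactness $\rho^*\ol h\ge 2c$ on $Z\cap Y$ for some $c>0$. Pick $h_0\in C^\infty(V)$ with $h_0|_Y=\rho^*\ol h$; then $h_0\ge c$ on an open neighbourhood $W$ of $Z\cap Y$ in $V$, so $Z\setminus W$ is compact and disjoint from the closed set $Y$. Choose $b\in C^\infty(V)$ with $b\ge 0$, $b>0$ on $Z\setminus W$ and $\supp b\cap Y=\varnothing$, and set $h:=h_0+Kb$ with $K>0$ large enough (by compactness of $Z\setminus W$) that $h>0$ on $Z\setminus W$. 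Then $h>0$ on all of $Z$, while $h|_Y=h_0|_Y=\rho^*\ol h$. Hence $\mu(\wt\phi_h)>0$ by superheaviness of $Z$, and $\ol\mu(\wt\phi_{\ol h})=\mu(\wt\phi_h)>0$ by the reduction identity, so $\ol Z$ is $\ol\mu$-superheavy.

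\emph{Subheavy.} Assume $Z$ is $\mu$-subheavy and $\ol h|_{\ol Z}\equiv 0$; since $-\ol h$ also vanishes on $\ol Z$ and $\ol\mu(\wt\phi_{-\ol h})=-\ol\mu(\wt\phi_{\ol h})$, it suffices to prove $\ol\mu(\wt\phi_{\ol h})\le 0$. Fix a tubular neighbourhood $U$ of $Y$ with projection $p\fc U\to Y$ and an integer $k\ge 1$. A compactness argument shows that for every neighbourhood $N$ of $Z\cap Y$ in $Y$ there is a neighbourhood $U'\subseteq U$ of $Y$ in $V$ with $p(Z\cap U')\subseteq N$: otherwise one finds $z_n\in Z$ with $z_n\to z\in Z\cap Y$ and $p(z_n)\notin N$, contradicting $p(z_n)\to p(z)=z\in N$. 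Given $\epsilon>0$, since $\rho^*\ol h$ vanishes on $Z\cap Y$ choose $N$ with $|\rho^*\ol h|<\epsilon/k$ on $N$, then $U'$ as above, then $\chi\in C^\infty(V)$ with $\chi\equiv 1$ near $Y$ and $\supp\chi\subseteq U'$; the function $h':=\chi\cdot(k\,\rho^*\ol h\circ p)$ satisfies $h'|_Y=k\,\rho^*\ol h$ and $h'<\epsilon$ on $Z$. Then $h'-\epsilon<0$ on $Z$, hence on an open $O\supseteq Z$; with $\psi\in C^\infty(V)$, $\psi\equiv 1$ near $Z$, $\supp\psi\subseteq O$, the function $g:=(1-\psi)(h'-\epsilon)$ satisfies $g|_Z\equiv 0$ and $g\ge h'-\epsilon$. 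By subheaviness of $Z$, $\mu(\wt\phi_g)=0$, so $\mu(\wt\phi_{h'-\epsilon})\le 0$; and as $(h'-\epsilon)|_Y=\rho^*(k\ol h-\epsilon)$, the reduction identity gives $\ol\mu(\wt\phi_{k\ol h-\epsilon})=\mu(\wt\phi_{h'-\epsilon})\le 0$ for every $\epsilon>0$. To remove $\epsilon$, put $C_k:=\exp\big(k\,\|\ol R\,\ol h\|_{C^0}\big)$, which bounds the conformal factors of the flow of $k\ol h-\epsilon$ on $[0,1]$ independently of $\epsilon$ (the constant being annihilated by $\ol R:=R_{\ol\alpha}$); then $(k\ol h-\epsilon)\#(\epsilon C_k)\ge (k\ol h-\epsilon)+\epsilon C_k\!\cdot\! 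C_k^{-1}=k\ol h$, so $\wt\phi_{k\ol h}\preceq\wt\phi_{k\ol h-\epsilon}\wt\phi_{\epsilon C_k}$ and
$$\ol\mu(\wt\phi_{k\ol h})\le\ol\mu(\wt\phi_{k\ol h-\epsilon})+\epsilon C_k\,\ol\mu(\wt\phi_1)+D_{\ol\mu}\le\epsilon C_k\,\ol\mu(\wt\phi_1)+D_{\ol\mu},$$
where $D_{\ol\mu}$ is the defect of $\ol\mu$. Letting $\epsilon\to 0$ with $k$ fixed gives $k\,\ol\mu(\wt\phi_{\ol h})=\ol\mu(\wt\phi_{k\ol h})\le D_{\ol\mu}$, and then $k\to\infty$ gives $\ol\mu(\wt\phi_{\ol h})\le 0$, as wanted.

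\emph{Where the difficulty lies.} The superheavy case is easy: strict positivity on the compact set $Z\cap Y$ is a buffer that lets one push an arbitrary extension of $\rho^*\ol h$ up to be positive on all of $Z$. The subheavy case is the delicate one, for two reasons. First, there is no such buffer (any admissible extension must vanish on $Z\cap Y$), and one cannot in general extend $\rho^*\ol h$ to a function vanishing on all of $Z$; this is what forces the auxiliary constant $\epsilon$, together with the compactness lemma used to make the extension $h'$ arbitrarily small on $Z$. Second, $\epsilon$ cannot be removed by a naïve limit, since a monotone quasi-morphism need not be $C^0$-continuous; one instead sandwiches $\wt\phi_{k\ol h}$ between $\wt\phi_{k\ol h-\epsilon}$ and a composition of it with a short Reeb flow, and exploits homogeneity of $\ol\mu$ along the multiples $k\ol h$ to absorb the defect. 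Making these two steps work is the crux of the argument.
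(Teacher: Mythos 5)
The paper states this result with a \qed, citing it from \cite[Theorem~1.14]{Borman_Zapolsky_Qms_Cont_rigidity}; it contains no proof of its own, so there is nothing to compare against line by line. As a self-contained argument, your proof is correct. The superheavy case reduces cleanly to the reduction identity of Theorem~\ref{thm:reduction_thm} because strict positivity on the compact set $Z\cap Y$ gives you room to push any extension of $\rho^*\ol h$ up off $Y$ to be positive on all of $Z$. The subheavy case is genuinely harder, exactly for the reason you identify: an admissible extension $h$ of $\rho^*\ol h$ need not vanish on all of $Z$, since nonvanishing of $\rho^*\ol h$ on $Y\setminus(Z\cap Y)$ can propagate along $Z$ into any neighbourhood of $Z\cap Y$. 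Your cutoff construction of $h'$, the comparison function $g$ vanishing on $Z$ with $g\ge h'-\epsilon$, the passage to the quotient via the reduction identity, and the removal of $\epsilon$ by sandwiching $\wt\phi_{k\ol h}$ between $\wt\phi_{k\ol h-\epsilon}$ and its composition with a short Reeb flow --- with the conformal-factor bound $C_k$ independent of $\epsilon$ since $R_{\ol\alpha}$ annihilates constants --- followed by homogeneity in $k$, are all sound, and the lemma $\ol\mu(\wt\phi_s)=s\,\ol\mu(\wt\phi_1)$ is correctly justified (a homogeneous quasi-morphism restricted to an abelian subgroup is a homomorphism, and a monotone additive function on $\R$ is linear). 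One small caveat worth flagging: your argument, like the statement itself, tacitly assumes $Z\cap Y\ne\varnothing$; when $Z\cap Y=\varnothing$ your subheavy argument would yield $\ol\mu\equiv 0$, which is just the well-known fact that the empty set cannot be subheavy for a nonzero quasi-morphism, so the statement should be read with that understanding. In the superheavy case the issue does not arise, since $Y$ is subheavy and $Z$ superheavy forces $Z\cap Y\ne\varnothing$ by Proposition~\ref{prop:properties_rigid_subsets_ct_mfds}(iv).
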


Both for the construction of the quasi-morphism in Theorem \ref{thm:main_result} and for applications to symplectic and contact rigidity we need the connection between monotone quasi-morphisms on the total space of a prequantization bundle and symplectic quasi-states on its base. Let $(M,\omega)$ be a closed symplectic manifold. A functional $\zeta \fc C^\infty(M) \to \R$ is called a quasi-state if $\zeta(1) = 1$; $\zeta(F) \leq \zeta (G)$ whenever $F \leq G$; $\zeta$ is linear on Poisson-commuting subalgebras of $C^\infty(M)$. This notion appears in \cite{Entov_Polterovich_Quasi_states_symplectic_intersections} under the name ``symplectic quasi-state,'' to distinguish it from the more general notion of a topological quasi-state, due to Aarnes \cite{Aarnes_Qss_qms}. Since here we only deal with symplectic quasi-states, we drop the qualifier. In the presence of a quasi-state certain subsets exhibit rigidity. Following \cite{Entov_Polterovich_Quasi_states_symplectic_intersections}, a closed subset $X \subset M$ is called $\zeta$-superheavy if for all $H \in C^\infty(M)$
$$H|_X = c \in \R\Rightarrow \zeta(H) = c\,.$$
Any two superheavy subsets intersect, and if $\zeta$ is invariant under the action of the Hamiltonian group $\Ham(M,\omega)$ on $C^\infty(M)$, then the collection of superheavy subsets is likewise $\Ham(M,\omega)$-invariant, see \emph{ibid}.

Given a prequantization bundle $(V,\alpha,\pi,M,\omega)$, there is a natural short exact sequence of Lie algebras:
$$0 \to \R \to C^\infty(V)^{S^1} \to C^\infty(M)/\R \to 0\,,$$
where $C^\infty(V)^{S^1} \subset C^\infty(V)$ is the subalgebra of $S^1$-invariant functions. The Lie algebra structure on $C^\infty(M)/\R$ is given by the Poisson bracket, while on $C^\infty(V)$ it is the Lie bracket on the contact vector fields on $V$, transferred to $C^\infty(V)$ via $\alpha$.
This short exact sequence has a unique splitting by the Lie algebra morphism
$$\iota \fc C^\infty(M)/\R \to C^\infty(V)^{S^1}\,,\qquad H + \R \mapsto \pi^*H - \frac{\int_M H\,\omega^n}{\int_{M}\omega^n}\,,$$
where $n = \frac 1 2 \dim M$. There is then the induced homomorphism
$$\iota^* \fc \wt\Ham(M,\omega) \to \wt\Cont_0(V,\xi)\,.$$
If $\mu \fc \wt\Cont_0(V,\xi) \to \R$ is a quasi-morphism, then $\iota_*\mu := \mu \circ \iota^* \fc \wt\Ham(M,\omega) \to \R$ is also a quasi-morphism. Moreover if $\mu$ is monotone, then the functional $\zeta_\mu \fc C^\infty(M) \to \R$ defined by
\begin{equation}\label{eqn:qs_from_qm}
\zeta_\mu(H) = \frac{\mu(\wt\phi_{\pi^*H})}{\mu(\wt\phi_1)}
\end{equation}
is a quasi-state, and it can easily be seen to be $\Ham(M,\omega)$-invariant.

We will need the following connection between rigid subsets of $V$ and $M$:
\begin{prop}[{\cite[Proposition 1.20]{Borman_Zapolsky_Qms_Cont_rigidity}}]\label{prop:relation_superheavy_preq_bdle_total_sp_base} Let $\mu \fc \wt\Cont_0(V,\xi) \to \R$ be a monotone quasi-morphism and let $\zeta \fc C^\infty(M) \to \R$ be the quasi-state induced on $M$ from $\mu$. Then:
\begin{enumerate}
  \item If $Z \subset V$ is $\mu$-subheavy, then $\pi(Z) \subset M$ is $\zeta$-superheavy;
  \item If $X \subset M$ is $\zeta$-superheavy, then $\pi^{-1}(X) \subset V$ is $\mu$-superheavy. \qed
\end{enumerate}
\end{prop}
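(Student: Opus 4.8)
The plan is to read both statements directly off the defining formula \eqref{eqn:qs_from_qm} for $\zeta$, using two structural features of the prequantization setup: for $G\in C^\infty(M)$ the pullback $\pi^*G$ lies in $C^\infty(V)^{S^1}$, so the $S^1$-action — whose infinitesimal generator is the Reeb field $R_\alpha$ — preserves $\pi^*G$, and hence commutes with its contact vector field $X_{\pi^*G}$; and $\mu(\wt\phi_1)>0$ (part of the cited framework, and exactly what makes \eqref{eqn:qs_from_qm} a monotone quasi-state). I will also use standard facts from that framework: the correspondence $h\mapsto X_h$ between contact Hamiltonians and contact vector fields, taken with respect to $\alpha$, is $\R$-linear; $h\le g$ pointwise implies $\wt\phi_h\preceq\wt\phi_g$, hence $\mu(\wt\phi_h)\le\mu(\wt\phi_g)$; and a homogeneous quasi-morphism is additive on commuting elements (see \cite{Calegari_scl}).

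For (i), let $Z$ be $\mu$-subheavy; then $\pi(Z)$ is closed, being the image of a compact set. Fix $H\in C^\infty(M)$ with $H|_{\pi(Z)}=c$ and put $G:=H-c$, so $\pi^*G$ vanishes on $Z$ and therefore $\mu(\wt\phi_{\pi^*G})=0$ by $\mu$-subheaviness. By $\R$-linearity, $X_{\pi^*G+c}=X_{\pi^*G}+cR_\alpha$, and since $X_{\pi^*G}$ commutes with $R_\alpha=X_1$ these flows commute, so $\wt\phi_{\pi^*H}=\wt\phi_{\pi^*G+c}=\wt\phi_{\pi^*G}\cdot\wt\phi_c$ with the two factors commuting; additivity of $\mu$ on commuting elements gives $\mu(\wt\phi_{\pi^*H})=\mu(\wt\phi_{\pi^*G})+\mu(\wt\phi_c)=\mu(\wt\phi_c)$. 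Finally $c\mapsto\wt\phi_c$ is a one-parameter subgroup (the lifted Reeb flow), and $c\le c'$ gives $\wt\phi_c\preceq\wt\phi_{c'}$ since the connecting contact Hamiltonian is the nonnegative constant $c'-c$; hence $c\mapsto\mu(\wt\phi_c)$ is monotone, and being additive over $\Q$ by homogeneity it is linear, equal to $c\,\mu(\wt\phi_1)$. Dividing by $\mu(\wt\phi_1)$ yields $\zeta(H)=c$, so $\pi(Z)$ is $\zeta$-superheavy.

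For (ii), let $X$ be $\zeta$-superheavy and $h\in C^\infty(V)$ with $h|_{\pi^{-1}(X)}>0$. The crucial move is to dominate $h$ from below by an $S^1$-invariant function: set $h_-(v):=\min_{\theta\in S^1}h\big(\phi^\theta_{R_\alpha}(v)\big)$. Then $h_-$ is $S^1$-invariant and $h_-\le h$, and — since $\pi^{-1}(X)$ is $S^1$-invariant with compact orbits — $h_->0$ on $\pi^{-1}(X)$. After a routine $S^1$-equivariant smoothing (replacing $h_-$ by a smooth $S^1$-invariant function still lying below $h$ and still positive on $\pi^{-1}(X)$) we may assume $h_-\in C^\infty(V)^{S^1}$, so $h_-=\pi^*\ol g$ for some $\ol g\in C^\infty(M)$ with $\inf_X\ol g=\inf_{\pi^{-1}(X)}h_->0$. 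Then, by monotonicity of $\mu$ and \eqref{eqn:qs_from_qm},
$$\mu(\wt\phi_h)\ \ge\ \mu(\wt\phi_{h_-})\ =\ \mu(\wt\phi_{\pi^*\ol g})\ =\ \mu(\wt\phi_1)\,\zeta(\ol g)\,.$$
Finally, $\zeta$-superheaviness of $X$ together with monotonicity of $\zeta$ (applied to a smoothing of $\min(\ol g,\inf_X\ol g)$, which lies below $\ol g$ and equals $\inf_X\ol g$ on $X$) gives $\zeta(\ol g)\ge\inf_X\ol g>0$; since $\mu(\wt\phi_1)>0$, we conclude $\mu(\wt\phi_h)>0$, i.e.\ $\pi^{-1}(X)$ is $\mu$-superheavy.

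The main obstacle is the reduction to the $S^1$-invariant case in (ii): a priori $h$ may be very negative away from $\pi^{-1}(X)$, so one cannot simply dominate it from below by $\pi^*\ol g$ with $\ol g\ge0$; the fiberwise minimum $h_-$ is precisely the device that produces an $S^1$-invariant competitor below $h$ whose values over $X$ remain controlled, after which \eqref{eqn:qs_from_qm} and superheaviness of $X$ finish the job. Everything else — $\R$-linearity of $h\mapsto X_h$, the monotonicity lemma $h\le g\Rightarrow\wt\phi_h\preceq\wt\phi_g$, additivity of homogeneous quasi-morphisms on commuting elements, and $\mu(\wt\phi_1)>0$ — belongs to the established framework and enters only in routine ways.
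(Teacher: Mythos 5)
The paper states this proposition with a \qed and a citation to \cite{Borman_Zapolsky_Qms_Cont_rigidity}, providing no proof of its own, so there is no in-paper argument to compare against; I will assess your proof on its merits. It is correct in substance and takes the natural direct route from formula \eqref{eqn:qs_from_qm}. Part (i) is clean: the decomposition $\wt\phi_{\pi^*H}=\wt\phi_{\pi^*(H-c)}\cdot\wt\phi_c$ into commuting factors (valid because $\pi^*(H-c)$ is $S^1$-invariant, so its contact field commutes with $R_\alpha$), additivity of a homogeneous quasi-morphism on commuting elements, subheaviness killing the first factor, and the ``monotone plus $\Q$-homogeneous $\Rightarrow$ $\R$-linear'' argument for $c\mapsto\mu(\wt\phi_c)$ all go through; closedness of $\pi(Z)$ holds since $V$ is compact. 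Part (ii) correctly identifies the fiberwise minimum $h_-$ as the key device, and the deduction $\mu(\wt\phi_h)\ge\mu(\wt\phi_{\pi^*\ol g})=\mu(\wt\phi_1)\zeta(\ol g)$ is right.

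One small imprecision at the end of (ii): a smooth function that \emph{both} lies below $\ol g$ \emph{and} is literally constant equal to $m:=\inf_X\ol g$ on $X$ need not exist after an honest smoothing of $\min(\ol g,m)$, since at points of $X$ where $\ol g$ attains $m$ the two constraints pin the jet of the candidate, and $X$ is an arbitrary closed set. The clean fix is an $\epsilon$-approximation: pick a smooth nondecreasing $\psi\le\mathrm{id}$ with $\psi\equiv m-\epsilon$ on $(-\infty,m]$; then $\tilde g=\psi\circ\ol g\le\ol g$ and $\tilde g|_X\equiv m-\epsilon$, so $\zeta(\ol g)\ge\zeta(\tilde g)=m-\epsilon$, and $\epsilon\to0$ gives $\zeta(\ol g)\ge m>0$. (The same $\epsilon$-room makes the $S^1$-equivariant smoothing of $h_-$ genuinely routine: mollify $h_--\delta$ to get a smooth minorant of $h_-$, then average over $S^1$; $S^1$-invariance of $h_-$ and of $\pi^{-1}(X)$ preserve both the inequality and the positivity.) With this small repair your argument is complete and is essentially the standard proof one would give from \eqref{eqn:qs_from_qm}.
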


\section{Proofs}\label{s:proofs}

\subsection{Construction of the prequantization bundle}\label{ss:contruction_preq_bdle}

The prequantization bundle whose existence is asserted in Theorem \ref{thm:main_result} is obtained by symplectic and contact reduction. The reader is referred to \cite{Abraham_Marsden_Foundations_mechanics}, \cite{Geiges_Intro_ct_topology}, \cite{Borman_Zapolsky_Qms_Cont_rigidity}, \cite{McDuff_Salamon_Intro_sympl_topology} for background. We will use the following convention: if $(W,\eta)$, $(\ol W,\ol\eta)$ are symplectic manifolds and $\ol W$ is obtained from $W$ by symplectic reduction, then $\ol \eta$ is always the symplectic form coming from reduction, even if we do not explicitly state so, that is if $Z \subset W$ is the coisotropic reduction locus and $\pr \fc Z \to \ol W$ is the corresponding quotient map, then $\pr^*\ol\eta = \eta|_Z$.

The construction, though relatively elementary, is somewhat involved, so we divided it into a few subsections for clarity. We would also like to note that parts of this construction are already contained in \cite{Borman_Zapolsky_Qms_Cont_rigidity}, but we nevertheless present it in full detail for the sake of self-containment.

\subsubsection{Unitary reduction}

The base of the prequantization bundle in Theorem \ref{thm:main_result} is constructed by symplectic reduction of a complex vector space. The foundation for all the constructions below is the Hamiltonian action of the unitary group $\Uni(D)$ on $\C^D$. Let $\omega_{\std}$ be the standard symplectic form on $\C^D$. The action of $\Uni(D)$ on $\C^D$ has moment map $\Phi_{\Uni(D)} \fc \C^D \to \mfu(D)^*$ given by
$$\langle \Phi_{\Uni(D)}(z),X\rangle = \alpha_{\std,z}(Xz)\,,$$
where $z \in \C^D$, $X \in\mfu(D)$, $\alpha_{\std}$ is the Liouville form on $\C^D$ defined by $\alpha_{\std,z}=\frac 1 2\omega_{\std}(z,\cdot)$, and where we consider $X$ as a linear map on $\C^D$. In applications it is convenient to have another description of the moment map. To this end let us identify $\mfu(D)^* \simeq \mfu(D)$ by means of the inner product $(X,Y) \mapsto \tr(X^*Y)$ on $\mfu(D)$. The corresponding map $\wt \Phi_{\Uni(D)} \fc \C^D \to \mfu(D)$ writes
$$\wt \Phi_{\Uni(D)}(z) = \tfrac i 2\,zz^*\,,$$
where we consider $z$ as a column matrix. In symplectic reduction one considers coadjoint orbits. For our purposes we only consider coadjoint orbits which are fixed points. In general, a point $p \in \mfg^*$, where $\mfg$ is the Lie algebra of a compact connected Lie group $G$, is fixed by the coadjoint action if and only if $p$ vanishes on the commutator ideal $[\mfg,\mfg]$. The commutator ideal of $\mfu(D)$ is $\mfsu(D)$, which is the kernel of the functional $-\frac i 2 \tr \in \mfu(D)^*$, so all the fixed points of the coadjoint action are its multiples. The isomorphism $\mfu(D)^* \simeq \mfu(D)$ maps it to $\frac i 2 I \in \mfu(D)$.

\subsubsection{Toric manifolds}

First we describe how toric manifolds are constructed using symplectic reduction. We view $S^1$ as the unit circle in $\C^*$, and identify it with $\R/2\pi \Z$ by means of $\exp\fc \R/2\pi\Z\to S^1$, $t\mapsto e^{it}$. In particular we identify $\Lie(S^1) = \R$.

Let $\T^d = (S^1)^d$ be the maximal torus of $\Uni(d)$ consisting of diagonal matrices. The Lie algebra of $\T^d$ is $\R^d$. The embedding $\R^d \to \mfu(d)$ is given by $(t_1,\dots,t_d)\mapsto \diag (it_1,\dots,it_d)$. Since the action of $\Uni(d)$ on $\C^d$ is Hamiltonian, so is the restricted action of $\T^d$, and the corresponding moment map $\Phi_{\T^d} \fc \C^d \to \R^{d*}$ is the composition of $\Phi_{\Uni(d)}$ with the projection $\mfu(d)^*\to\R^{d*}$. Identifying $\R^{d*} \simeq \R^d$ in the obvious way, we get $\Phi_{\T^d}(z) = \frac 1 2(|z_1|^2,\dots,|z_d|^2)$. The point $-\frac i 2 \tr \in \mfu(d)^*$ maps to $\frac 1 2(1,\dots,1) \in \R^d \simeq \R^{d*}$ by the projection.

Let $N$ be a compact symplectic toric manifold, on which a torus $\T$ acts effectively via a moment map $\Phi_\T \fc N \to \mft^*$, where $\mft$ is the Lie algebra of $\T$. Let $\Delta \subset \mft^*$ be the moment polytope and let $\nu_1,\dots,\nu_d$ be the conormals to its faces. These are primitive vectors in the integer lattice $\mft_\Z = \ker (\exp \fc \mft \to \T)$. The moment polytope is then given by
$$\Delta = \{x \in \mft^*\,|\,\langle x,\nu_j\rangle+a_j \geq 0\text{ for }j=1,\dots, d\}\,,$$
where the $a_j$ are constants. Delzant \cite{Delzant_Hamiltoniens_periodiques_images_convexes_appl_moment} provided a way to exhibit $N$ as the symplectic reduction of $\C^d$ by the Hamiltonian action of a suitable subtorus of $\T^d$, as follows. Define the map
$$\beta \fc \R^d \to \mft\,,\qquad e_j \mapsto \nu_j\,,$$
where the $e_j$ are the standard basis vectors of $\R^d$. Let $\mfk:=\ker\beta$. Then $\mfk \subset \R^d$ is the Lie algebra of a subtorus $\K \subset \T^d$, which therefore has a Hamiltonian action on $\C^d$ given by the moment map $\Phi_\K \fc \C^d \to \mfk^*$, given by the composition of $\Phi_{\T^d}$ with the projection $\iota^*\fc\R^{d*} \to \mfk^*$, where $\iota \fc \mfk \to \R^d$ is the inclusion. Let $a = (a_1,\dots,a_d) \in \R^{d*}$ and define $c:=\iota^*a \in \mfk^*$. Then, up to a $\T$-equivariant symplectomorphism, $N$ is the reduction of $\C^d$ by the action of $\K$ at level $c$, that is $c$ is a regular value of $\Phi_\K$, $\K$ acts freely on the corresponding level set, and
$$N = \Phi_\K^{-1}(c)/\K\,,$$
together with the induced symplectic form. The quotient manifold inherits a Hamiltonian action of $\T^d/\K$, which is canonically isomorphic to $\T$, and the moment map is induced by the restriction of $\Phi_{\T^d}$ to $\Phi_\K^{-1}(c)$, up to shift. In what follows we will assume that our toric manifolds already come as symplectic reduction as discussed.
\begin{rem}\label{rem:condition_for_monotonicity_toric_mfd}
Note that $N$ is monotone if and only if $c$ is proportional to $\iota^*\big(\frac1 2(1,\dots,1)\big)$. This is because of the natural identifications $H^2(N;\R) = \mfk^*$, $H^2(N;\Z) = \mfk_\Z^*$, where $\mfk_\Z^* \subset \mfk^*$ is the integer lattice of functionals taking integer values on $\mfk_\Z \subset \mfk$. Under these identifications, the class of the symplectic form on $N$ is given by $c \in \mfk^*$ while $c_1(N)$ is $\iota^*(1,\dots,1)$.
\end{rem}

Assume now that $N$ is monotone, and, without loss of generality, that it is the symplectic reduction of $\C^d$ by the action of $\K \subset \T^d$ at the level $\iota^*\big(\frac 1 2 (1,\dots,1)\big)$. Since $N$ is compact, there exists a primitive weight vector $\gamma = (\gamma_1,\dots,\gamma_d) \in \N^d$ such that $\sum_{j=1}^{d}\gamma_j\nu_j = 0$. It follows that $\gamma \in \mfk$, and therefore the image of the embedding $S^1 \to \T^d$, $e^{it} \mapsto (e^{i\gamma_1t},\dots,e^{i\gamma_dt})$, is contained in $\K$. The moment map of the action of $S^1$ on $\C^d$ via this embedding is $\C^d\to\R^*$, $z \mapsto \frac 1 2 \sum_{j=1}^{d}\gamma_j|z_j|^2$, while the image of $c \in \mfk^*$ by the induced projection $\mfk^*\to\R^*\simeq\R$ equals $\frac 1 2\sum_{j=1}^{d}\gamma_j$. It follows that the level set $\Phi_\K^{-1}(c)$ lies in the sphere
$$S^{2d-1}_\gamma:=\big\{z\in\C^d\,|\,\textstyle \sum_{j=1}^{d}\gamma_j|z_j|^2 = \sum_{j=1}^{d}\gamma_j\big\}\,.$$
This will be important below.

\subsubsection{Complex Grassmannians and products}

Next we show how the complex Grassmannian $\Gr_k(\C^n)$ can be obtained by symplectic reduction from $\C^{nk}$. Viewing $\C^{nk}$ as the set of $n\times k$ matrices, the group $\Uni(k)$ acts on it by multiplication on the right. This action is unitary, and the corresponding embedding $\Uni(k) \to \Uni(nk)$ is given by mapping $A \in \Uni(k)$ to the block-diagonal $nk\times nk$ matrix whose only nonzero entries consist of $n$ diagonal $k\times k$ blocks all equal to $A$. The same procedure gives us the corresponding embedding of Lie algebras $\mfu(k) \to \mfu(nk)$. This action of $\Uni(k)$ on $\C^{nk}$ is Hamiltonian with moment map $\Phi_{\Uni(k)}\fc \C^{nk} \to \mfu(k)^*$, which is the composition of $\Phi_{\Uni(D)}$ with the projection $\mfu(nk)^* \to \mfu(k)^*$ dual to the above inclusion. Identifying $\mfu(k)^*\simeq\mfu(k)$ via the trace inner product gives us
$$\wt\Phi_{\Uni(k)} \fc \C^{nk} = (\C^n)^k \to \mfu(k)\,,\quad \wt\Phi_{\Uni(k)}(z_1,\dots,z_k) = \big(\tfrac i 2 (z_j,z_l)\big)_{jl}\,,$$
where for $z,w \in \C^n$ their Hermitian product is $(z,w) = \sum_{i=1}^{n}z_i\ol{w_i}$. The restriction of $-\frac i 2 \tr \in \mfu(nk)^*$ to $\mfu(k)$ is $-\frac {in} 2\tr \in \mfu(k)^*$. The corresponding level set of the moment map is $$\Phi_{\Uni(k)}^{-1}(-\tfrac{in}2\tr) = \{(z_1,\dots,z_k)\in(\C^n)^k\,|\,(z_j,z_l) = n\delta_{jl}\}\,,$$
that is the set of Hermitian orthogonal $k$-frames in $\C^n$ consisting of vectors having norm $\sqrt{n}$. It is in a natural bijection with the Stiefel variety $\Stie_k(\C^n)$ and the quotient is canonically $\Gr_k(\C^n)$ with the induced symplectic form. Note that the level set $\Phi_{\Uni(k)}^{-1}(-\frac{in}2\tr)$ is contained in the sphere $S^{2kn-1}_{\sqrt{kn}} \subset \C^{nk}$ where the subscript indicates the radius.

Next we show how to use symplectic reduction to obtain the monotone product
$$N \times \prod_{i=1}^{r}\Gr_{k_i}(\C^{n_i})\,,$$
where $N$ is a monotone compact toric manifold. Symplectic reduction behaves well with respect to products, that is, if we have a number of symplectic manifolds obtained by symplectic reduction, then so is their product. As we showed above, $N$ is the symplectic reduction of $\C^d$, where $d$ is the number of faces of its moment polytope, by a suitable torus $\K \subset \T^d$ at the level $c \in \mfk^*$. Also, $\Gr_{k_i}(\C^{n_i})$ is obtained by symplectic reduction of $\C^{n_ik_i}$ by the action of $\Uni(k_i)$ at the level $-\frac{in_i}2\tr \in \mfu(k_i)^*$. Therefore the sought-for product is the symplectic reduction of $\C^D$, where $D = d + \sum_{i=1}^{r}n_ik_i$, by the action of $\K \times \prod_{i=1}^{r}\Uni(k_i)$. Let us describe this in more detail. Let $G = \K \times \prod_{i=1}^{r}\Uni(k_i)$ and let $\mfg = \mfk \oplus \bigoplus_{i=1}^r\mfu(k_i)$ be its Lie algebra. Using the embeddings $\K \subset \Uni(d)$ and $\Uni(k_i) \to \Uni(n_ik_i)$ as above, we obtain the embedding
$$G \to \Uni(d) \times \prod_{i=1}^{r}\Uni(n_ik_i) \subset \Uni(D)\,,$$
where the last inclusion is the obvious block-diagonal embedding. The action of $G$ on $\C^D$ via this embedding is Hamiltonian with moment map
$$\Phi_G \fc \C^D = \C^d \times \prod_{i=1}^{r}\C^{n_ik_i} \to \mfg^*\,,\quad \Phi_G(z;w_1,\dots,w_r) = (\Phi_\K(z),\Phi_{\Uni(k_1)}(w_1),\dots,\Phi_{\Uni(k_r)}(w_r))\,,$$
where we identified $\mfg^*=\mfk^*\oplus\bigoplus_{i=1}^r\mfu(k_i)^*$. Let $p \in \mfg^*$ be the restriction of $-\frac i 2 \tr\in\mfu(D)^*$. Then $p = (c,-\frac{in_1}2\tr,\dots,-\frac{in_r}2\tr)$. It is a regular value of $\Phi_G$ and we have
$$\Phi_G^{-1}(p) = \Phi_\K^{-1}(c) \times \prod_{i=1}^{r}\Phi_{\Uni(k_i)}^{-1}(-\tfrac{in_i}2\tr) = \Phi_\K^{-1}(c) \times \prod_{i=1}^{r}\Stie_{k_i}(\C^{n_i})\,,$$
and the reduced manifold is
\begin{equation}\label{eqn:base_preq_bdle_as_reduction}
\Phi_G^{-1}(p)/G = N \times \prod_{i=1}^{r}\Gr_{k_i}(\C^{n_i})\,.
\end{equation}
We prove the following at the end of the next subsection:
\begin{lemma}\label{lemma:base_is_monotone}
This symplectic manifold is monotone.
\end{lemma}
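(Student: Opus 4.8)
The plan is to reduce the claim to the two well-understood building blocks — the toric factor $N$ and the Grassmannian factors $\Gr_{k_i}(\C^{n_i})$ — and to use the fact that monotonicity is detected on $H_2$ together with the behavior of the canonical class under symplectic reduction. Concretely, the reduced manifold \eqref{eqn:base_preq_bdle_as_reduction} is obtained from $\C^D$ by reduction at the level $p \in \mfg^*$ which, by construction, is exactly the restriction of $-\tfrac i2\tr \in \mfu(D)^*$ to $\mfg$. I would first record the general principle, already implicit in Remark \ref{rem:condition_for_monotonicity_toric_mfd}: if a symplectic manifold $W$ is obtained from $\C^D$ by reduction by a compact connected group $G$ at a regular value $p \in \mfg^*$ on which $G$ acts freely, then under the Kirwan identifications $H^2(W;\R) \cong \mfg^*$ (modulo the relevant torsion/coadjoint subtleties, which vanish here since we only use fixed points of the coadjoint action and $\mfg$ is a sum of $\mfu(k)$'s and an abelian factor), the class $[\omega_W]$ corresponds to $p$ while $c_1(TW)$ corresponds to the restriction of $\sum_{j}\nu_j^{\std}$, i.e.\ of $-\tfrac i2\tr$ on each unitary block and of $(1,\dots,1)$ on the toric block. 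Hence $W$ is monotone precisely when $p$ is a positive multiple of this "canonical" functional.

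Next I would check that the given level $p = (c, -\tfrac{in_1}2\tr,\dots,-\tfrac{in_r}2\tr)$ satisfies this. On the $i$-th Grassmannian block the level is literally $-\tfrac{in_i}2\tr$, which is the canonical class of $\Gr_{k_i}(\C^{n_i})$ up to the positive factor $n_i$; this is just the standard monotonicity of complex Grassmannians, and it matches the general principle above since $-\tfrac{in_i}2\tr$ is the restriction of $-\tfrac i2\tr \in \mfu(n_ik_i)^*$ to $\mfu(k_i)$. On the toric block the level is $c = \iota^*a$, and by hypothesis (stated in the paragraph preceding Lemma \ref{lemma:base_is_monotone}) we have arranged $N$ to be the reduction at $c = \iota^*\big(\tfrac12(1,\dots,1)\big)$; by Remark \ref{rem:condition_for_monotonicity_toric_mfd} this is exactly the condition that makes $N$ monotone, with $[\omega_N]$ and $c_1(N)$ both proportional to $\iota^*(1,\dots,1)$ with the \emph{same} positive ratio. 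The one genuine point is that the proportionality constants on the different factors must agree so that the product is monotone and not merely a product of monotone pieces. Here is where the uniform choice of $p$ as the restriction of a single functional $-\tfrac i2\tr \in \mfu(D)^*$ does the work: since $[\omega_W] \leftrightarrow p$ and $c_1(TW) \leftrightarrow$ (restriction of the canonical $\mfu(D)^*$-functional), and $p$ is by construction a positive multiple — in fact, after the normalizations chosen above, one checks the multiple is the same on every block — of that canonical functional, monotonicity of the product follows at once.

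So the steps, in order, are: (1) state the reduction-monotonicity principle identifying $[\omega_W]$ with the level $p$ and $c_1(TW)$ with the restricted canonical functional, citing the Kirwan map and Remark \ref{rem:condition_for_monotonicity_toric_mfd}; (2) verify the principle applies — regular value, free action, $\mfg^*$ has no exotic fixed points of the coadjoint action beyond multiples of the trace functionals on each block; (3) compute that $p$ restricted to each block is the canonical functional of that block times the \emph{same} positive scalar, using the Grassmannian computation $-\tfrac{in_i}2\tr$ and the assumed normalization $c = \iota^*(\tfrac12(1,\dots,1))$ for $N$; (4) conclude monotonicity of the product.

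I expect the main obstacle to be step (2)–(3): making precise the assertion that $c_1(TW)$ corresponds to the restriction of the canonical $\mfu(D)^*$-functional, and that the proportionality constant is literally uniform across all blocks rather than merely block-by-block positive. The cleanest route is probably not to invoke Kirwan surjectivity in full generality but to argue directly: $W$ embeds as the symplectic quotient sitting inside $\C^D$, the normal bundle of the level set splits off trivially, and $c_1(TW) = \pr_*\big(c_1^{\mathrm{eq}}(T\C^D)\big)$ where the equivariant first Chern class of $\C^D$ is represented (via the moment map) by the functional $z \mapsto \langle \tfrac i2\tr,\cdot\rangle$; pulling this back to each factor and comparing with the level $p$ gives the identity on the nose. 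Everything else is bookkeeping with the identifications already set up in the preceding subsections.
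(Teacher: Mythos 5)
Your approach is genuinely different from the paper's, and it contains a real gap, so let me address both. The paper proceeds in two explicit stages: it first observes that the intermediate toric manifold $\wh M = N \times \prod_i \C P^{k_in_i-1}$, obtained by reducing $\C^D$ by the central torus $\T$ at the restriction of $-\tfrac i2\tr$, is monotone by Remark \ref{rem:condition_for_monotonicity_toric_mfd}, so that all its factors share a common monotonicity constant $\lambda$; it then passes from $\C P^{k_in_i-1}$ to $\Gr_{k_i}(\C^{n_i})$ by the second-stage reduction together with a hands-on computation: a codimension argument giving $\pi_2(\PV_k(\C^n)) \cong \pi_2(\C P^{kn-1})$, the observation that $\sigma_* \fc \pi_2(\PV_k(\C^n)) \to \pi_2(\Gr_k(\C^n))$ is multiplication by $k$, and a direct comparison of areas and minimal Chern numbers showing $\Gr_k(\C^n)$ also has constant $\lambda$. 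You instead posit a general ``reduction-monotonicity principle'' for reduction of $\C^D$ by an arbitrary compact group, via the Kirwan map and equivariant Chern classes. If fully established, this would indeed give a cleaner and more unified proof, and the fact that you observed that the uniformity of $\lambda$ across factors comes from $p$ being the restriction of a \emph{single} functional $-\tfrac i2\tr$ on $\mfu(D)$ is exactly the right structural point.

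The gap is that the principle is asserted rather than proved. Remark \ref{rem:condition_for_monotonicity_toric_mfd} only covers the abelian (toric) case; the nonabelian unitary blocks are precisely the new content of Lemma \ref{lemma:base_is_monotone}, and that is the part you leave as a sketch at the end. The sketch does hit the right ideas (the identification $H^2_G(\pt;\R) \cong (\mfg^*)^G$, triviality at the level of $c_1$ of the orbit and normal directions by unimodularity, identification of $c_1^{\mathrm{eq}}(T\C^D)$ with the trace character, and a Duistermaat--Heckman-type identification of $[\omega_W]$ with the level), but turning it into a proof is essentially as much work as the paper's concrete argument, and it is not the route the paper takes. There is also a normalization inconsistency: the canonical functional whose restriction should give $c_1$ is $-i\tr \in \mfu(D)^*$, which restricts to $(1,\dots,1)$ on $\R^d$ as in Remark \ref{rem:condition_for_monotonicity_toric_mfd}; your phrasing uses $-\tfrac i2\tr$ on the unitary blocks, which is off by a factor of $2$ relative to the toric block. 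This does not affect the proportionality conclusion, but it signals that the ``on the nose'' identification you invoke has not actually been pinned down.
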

The base of the prequantization bundle announced in Theorem \ref{thm:main_result} is the particular case of this construction when $N$ is even, $k_1=\dots=k_r = 2$ and $n_1,\dots,n_r$ are even numbers $\geq 4$.

\subsubsection{Reduction in stages}

It will be important for us to perform the above construction \emph{in stages}. Reduction in stages means that we first reduce by a normal subgroup of the given Lie group, and then reduce the resulting manifold by the quotient group. In our situation, we consider the torus
$$\T = \K \times (S^1)^r \subset \K \times \prod_{i=1}^{r}\Uni(k_i) = G\,,$$
where each circle is the center of the corresponding unitary group. It is the center of $G$; let $\mft$ be its Lie algebra. Let $\Phi_\T \fc \C^D \to \mft^*$ be the moment map of the Hamiltonian action of $\T$ on $\C^D$. The inclusion $\mft \to \mfu(D)$ is obtained as follows: we have the inclusions $\mfk \subset \R^d \subset \mfu(d)$, $\R \subset \mfu(n_ik_i)$, $t\mapsto itI$, and thus $\mft = \mfk \oplus \R^r \to \mfu(d)\oplus \bigoplus_{i=1}^r\mfu(n_ik_i) \subset \mfu(D)$. Therefore for the moment map we have
$$\Phi_\T \fc \C^D = \C^d\times\prod_{i=1}^{r}\C^{n_ik_i}\to\mfk^*\oplus\R^{r*}\,,\quad \Phi_\T(z;w_1,\dots,w_r) = (\Phi_\K(z),\tfrac 1 2 |w_1|^2,\dots,\tfrac 1 2 |w_r|^2)\,,$$
where we identified $\R^{r*} \simeq \R^r$. The element $-\frac i 2 \tr \in \mfu(D)^*$ restricts to
$$\wt c := \big(c,\tfrac{n_1k_1}2,\dots,\tfrac{n_rk_r}{2}\big) \in \mft^*\,,$$
which is a regular value of $\Phi_\T$. The corresponding level set is
$$\Phi_\T^{-1}(\wt c) = \Phi_\K^{-1}(c) \times \prod_{i=1}^{r}S^{2n_ik_i-1}_{\sqrt{k_in_i}}\,,$$
while the reduced manifold is
$$\wh M:=\Phi_\T^{-1}(\wt c)/\T = N \times \prod_{i=1}^{r}\C P^{n_ik_i-1}\,.$$
The quotient group
$$G/\T = \left(\K \times \prod_{i=1}^{r}\Uni(k_i)\right)\Big/\left(\K \times (S^1)^r\right) \simeq \prod_{i=1}^{r}\PU(k_i)$$
has Lie algebra $\mfg/\mft \simeq \bigoplus_{i=1}^r\mfpu(k_i)$. The induced action of $G/\T$ on $\Phi_\T^{-1}(\wt c)/\T$ is Hamiltonian with moment map
$$\Phi_{G/\T}\fc N \times \prod_{i=1}^{r}\C P^{n_ik_i-1} \to \bigoplus_{i=1}^r\mfpu(k_i)^*\,,$$
$$\Phi_{G/\T}(z;[w_1],\dots,[w_r])=(\Phi_{\Uni(k_1)}(w_1)+\tfrac{in_1}2\tr,\dots,\Phi_{\Uni(k_r)}(w_r)+\tfrac{in_r}2\tr)\,,$$
where $w_i \in S^{2n_ik_i-1}_{\sqrt{n_ik_i}}$ and $[w_i] \in \C P^{n_ik_i-1}$ is its image by the Hopf map. Note that $\mfpu(k_i)^*\subset \mfu(k_i)^*$ is the annihilator subspace of $\R \subset \mfu(k_i)$, and that $\Phi_{\Uni(k_i)}(w_i) + \frac{in_i}{2}\tr$ indeed vanishes on $\R$, meaning that the moment map is well-defined. The regular value at which we perform the reduction is $0\in(\mfg/\mft)^*$. The corresponding level set of the moment map is
$$\Phi_{G/\T}^{-1}(0) = N \times \prod_{i=1}^{r}\PV_{k_i}(\C^{n_i})\,,$$
where $\PV_{k}(\C^n) = \Stie_k(\C^n)/S^1$ is the projectivized Stiefel variety, and the reduced manifold is
\begin{equation}\label{eqn:base_reduced_by_PU}
\Phi_{G/\T}^{-1}(0)/(G/\T) = \left(N \times \prod_{i=1}^{r}\PV_{k_i}(\C^{n_i})\right)\Big/\left(\{1\}\times\prod_{i=1}^{r}\PU(k_i)\right) = N \times \prod_{i=1}^{r}\Gr_{k_i}(\C^{n_i})\,.
\end{equation}

\begin{proof}[Proof of Lemma \ref{lemma:base_is_monotone}]For a monotone symplectic manifold $(W,\omega^W)$ its monotonicity constant is the number $\lambda^W>0$ such that $c_1(W)=\lambda^W\omega^W$ as functionals on $H^2(W;\Z)$. Note that a product symplectic manifold is monotone if all the factors are monotone with the same monotonicity constant. It therefore suffices to show that all the factors in \eqref{eqn:base_preq_bdle_as_reduction} are monotone with the same monotonicity constant.

Let us first show that the toric manifold
$$\wh M=N \times \prod_{i=1}^{r}\C P^{k_in_i-1}$$
constructed above is monotone. Recall that it is obtained from $\C^D = \C^d \times \prod_{i=1}^{r}\C^{k_in_i}$ via symplectic reduction by the action of the torus $\T$. The moment map of its action is the composition $\kappa^* \circ \Phi_{\T^D}$ where $\Phi_{\T^D}\fc \C^D \to \R^{D*}$ is the moment map of the action of $\T^D$ while $\kappa^* \fc \R^{D*} \to \mft^*$ is the projection dual to the inclusion $\kappa \fc \mft \subset \R^D$. The reduction is at the level $\wt c \in \mft^*$ obtained by restricting $-\frac i 2\tr \in \mfu(D)^*$ to $\mft \subset \R^D \subset \mfu(D)$. The restriction of $-\frac i 2 \tr$ to $\R^D$ is the vector $\frac 1 2 (1,\dots,1)$, and $\wt c$ is its restriction to $\mft$. By Remark \ref{rem:condition_for_monotonicity_toric_mfd}, this means that the $\wh M$ is indeed monotone.

Let $\lambda > 0$ be its monotonicity constant, that is $c_1(\wh M) = \lambda\wh\omega$ as functionals on $H_2(\wh M;\Z)$, where $\wh \omega$ is the symplectic form coming from the standard symplectic form on $\C^D$ via reduction. It follows that the factors $N$ and $\C P^{k_in_i-1}$ are monotone with the same monotonicity constant $\lambda$. It therefore suffices to prove that $\Gr_{k_i}(\C^{n_i})$ has monotonicity constant $\lambda$. Note that $H_2(\Gr_{k_i}(\C^{n_i});\Z) = \Z$, generated by an appropriate Schubert cell, therefore it is automatically monotone. The only thing we need to take care of is the monotonicity constant.

To simplify notation, let $k=k_i$ and $n=n_i$. We have $H_2(\Gr_k(\C^n))=\pi_2(\Gr_2(\C^n))$ by the Hurewicz morphism, since $\Gr_k(\C^n)$ is simply connected. As a particular case of equation \eqref{eqn:base_reduced_by_PU}, $\Gr_k(\C^n)$ is obtained from $\C P^{kn-1}$ via symplectic reduction by the group $\PU(k)$. The level set of the moment map is $\PV_k(\C^n)$.

Let us show that the inclusion $\PV_k(\C^n) \subset \C P^{kn-1}$ induces an isomorphism on $\pi_2$. Indeed, consider the natural action of $\PGL(k) \supset \PU(k)$ on $\C P^{kn-1}$. The trace of $\PV_k(\C^n)$ by this action, that is $\PGL(k)\cdot \PV_k(\C^n)$, is the projectivization of the variety of $k$-frames in $\C^n$, therefore its complement $X \subset \C P^{kn-1}$ is the projectivization of the variety of singular $k$-frames in $\C^n$. The latter is an algebraic variety of complex codimension $n-k+1 \geq 3$ in $\C P^{kn-1}$ therefore real codimension at least $6$. It follows that a map $S^2 \to \C P^{kn-1}$ can be deformed away from $X$, and likewise for any homotopy of such maps, which proves the claim.

It follows that $\pi_2(\PV_k(\C^n)) = \Z$. The map on $\pi_2$ induced by the projection
$$\sigma \fc \PV_k(\C^n) \to \Gr_k(\C^n)$$
is the multiplication by $k$ viewed as a map $\Z \to \Z$. Let $\wt u \fc S^2 \to \PV_k(\C^n)$ be a smooth map representing the generator of $\pi_2(\PV_k(\C^n))$ having positive symplectic area. It means in particular that
$$\langle c_1(\C P^{kn-1}),[\wt u]\rangle = kn\,,$$
the minimal Chern number of $\C P^{kn-1}$. Let $u = \sigma\circ \wt u$. Denoting the symplectic form on $\Gr_k(\C^n)$ by $\omega$, we have $\sigma^*\omega=\omega^{\C P}|_{\PV_k(\C^n)}$, where $\omega^{\C P}$ is the symplectic form on $\C P^{kn-1}$. We then have
$$\langle \omega,[u]\rangle = \int_{S^2}u^*\omega = \int_{S^2}\wt u^*(\sigma^*\omega) = \int_{S^2}\wt u^*\omega^{\C P}=\langle \omega^{\C P},[\wt u]\rangle=\frac 1 \lambda \langle c_1(\C P^{kn-1}),[\wt u]\rangle = \frac{kn}\lambda\,.$$
On the other hand, $[u]$ is $k$ times the generator of $\Gr_k(\C^n)$ having positive area, which in particular means that
$$\langle c_1(\Gr_k(\C^n)),[u]\rangle = k\cdot n\,,$$
because the minimal Chern number of $\Gr_k(\C^n)$ is $n$. Thus we see that
$$\langle \omega,[u]\rangle = \frac{kn}{\lambda} = \frac 1 \lambda \langle c_1(\Gr_k(\C^n)),[u]\rangle\,,$$
which gives us the proportionality constant, that is $c_1(\Gr_k(\C^n)) = \lambda \omega$. This means that all the factors of $\wh M$ have the same monotonicity constant $\lambda$, therefore $\wh M$ is monotone.
\end{proof}

\subsubsection{Constructing the total space}\label{sss:constructing_total_space}

Next we describe the construction of the total space of the prequantization bundle. This is done using contact reduction. Recall the group $G = \K \times \prod_{i=1}^{r}\Uni(k_i) \subset \Uni(D)$ acting on $\C^D$ via the moment map $\Phi_G \fc \C^D \to \mfg^*$. We picked a weight vector $\gamma \in \N^d$ so that $\sum_{j=1}^{d}\gamma_j\nu_j = 0$. This implies that the embedding $S^1 \to \T^d$, $e^{it}\mapsto(e^{it\gamma_1},\dots,e^{it\gamma_d})$, has its image in $\K$. It follows that the embedding
$$S^1\to\T^D\,,\quad e^{it}\mapsto (e^{it\gamma_1},\dots,e^{it\gamma_d},e^{it},\dots,e^{it})$$
has its image in the central torus $\T \subset G$, because the first $d$ entries land in $\K$ while the rest land in the diagonal circles of the $\Uni(k_i)$. The action of $S^1$ on $\C^D$ via this embedding has moment map
$$\Phi_{S^1} \fc \C^D = \C^d \times \prod_{i=1}^{r}\C^{n_ik_i} \to \R^* \simeq \R\,,\quad\Phi_{S^1}(z;w_1,\dots,w_r) = \frac 1 2 \sum_{j=1}^{d}\gamma_j|z_j|^2+\frac 1 2 \sum_{i=1}^{r}|w_i|^2\,.$$
Let us define the weight vector $\Gamma = (\gamma_1,\dots,\gamma_d,1,\dots,1) \in \N^D$. Since $\Phi_{S^1}$ is the composition of $\Phi_G$ with the projection $\mfg^* \to \R^*$, the level set $\Phi_G^{-1}(p)$ is contained in the sphere
$$S^{2D-1}_{\Gamma} = \big\{(z;w_1,\dots,w_r)\,|\,\textstyle \sum_{j=1}^{d}\gamma_j|z_j|^2 + \sum_{i=1}^{r}|w_i|^2 = \sum_{j=1}^{d}\gamma_j+\sum_{i=1}^{r}n_ik_i\big\}\,,$$
since $\mfg^*\to\R^*$ maps $p\mapsto \frac 1 2\big(\sum_{j=1}^{d}\gamma_j + \sum_{i=1}^{r}n_ik_i\big)$. The subscript $\Gamma$ indicates that this is a sphere where the coordinates are weighted according to $\Gamma$, which is consistent with our definition of $S^{2d-1}_\gamma$ above. Let us define $\wh\alpha_\Gamma:=\alpha_{\std}|_{S^{2D-1}_\Gamma}$. This is a contact form on this sphere and thus $(S^{2D-1}_\Gamma,\ker\wh\alpha_\Gamma)$ is a contact manifold. Since $U(D)$ acts on $\C^D$ by maps preserving $\alpha$ and $G$ preserves $S_\Gamma^{2D-1}$, it follows that $G$ acts on it by contactomorphisms preserving $\wh\alpha_\Gamma$.

Consider the corresponding real projective space $\R P^{2D-1}_\Gamma=S^{2D-1}_\Gamma/\Z_2$ with the induced contact form $\alpha_\Gamma$ and contact structure $\xi_\Gamma:=\ker\alpha_\Gamma$. The total space of our prequantization bundle is obtained from $(\R P^{2D-1}_\Gamma,\xi_\Gamma)$ via contact reduction. The reason we have to use the real projective space as opposed to the sphere is that in Section \ref{ss:constructing_qm} we will use Givental's nonlinear Maslov index to construct a quasi-morphism using Theorem \ref{thm:reduction_thm}. Givental's index is a monotone quasi-morphism on $\wt\Cont_0(\R P^{2D-1})$, while no similar quasi-morphism exists for the sphere, since it is \emph{not orderable}, see \cite{Eliashberg_Kim_Polterovich_Geom_cont_transfs_dom_orderability} and \cite[Section 1.5]{Borman_Zapolsky_Qms_Cont_rigidity}. We will now describe this process in detail.

Recall $p \in \mfg^*$ and define $\mfg_0:=\ker p \subset \mfg$. Since $p$ is the restriction to $\mfg\subset \mfu(D)$ of $-\frac{i}{2}\tr \in \mfu(D)^*$, we have $\mfg_0 = \mfg\cap\ker(-\frac i 2\tr) = \mfg \cap \mfsu(D)$. It follows that $\mfg_0$ is the Lie algebra of the Lie group $G \cap \SU(D)$, and we let $G_0$ be its identity component. For $X \in \mfg_0$ and $z \in \Phi_G^{-1}(p)$ we have
$$\alpha_{\std,z}(Xz) = \langle\Phi_G(z),X\rangle = \langle p, X \rangle = 0\,,$$
meaning the infinitesimal action of $G_0$ on $\Phi_G^{-1}(p)$ is tangent to $\ker\wh\alpha_\Gamma|_{\Phi^{-1}(p)}$. It follows that the orbits of $G_0$ are tangent to $\ker\wh\alpha_\Gamma|_{\Phi^{-1}(p)}$, and since $G_0 \subset G$ preserves $\wh\alpha_\Gamma$, it descends to a unique $1$-form on the quotient $\Phi_G^{-1}(p)/G_0$.

To get the total space of the prequantization bundle of Theorem \ref{thm:main_result}, we need the corresponding reduction of $\R P^{2D-1}_\Gamma$. At this point we assume that $N$ is even, which implies that $-I \in \Uni(d)$ lies in the torus $\K$ (see \cite[Lemma 2.1]{Borman_Zapolsky_Qms_Cont_rigidity}), which implies that $-I \in \Uni(D)$ lies in $G$. Thus $\Z_2 = \{I,-I\} \subset \Uni(D)$ is contained in $G$. We let $G_0^\tau$ be the subgroup of $G$ generated by $G_0$ and $\Z_2$. Since $\Z_2$ is central in $G$, $G_0^\tau = G_0$ if $\Z_2 \subset G_0$ and $G_0^\tau = G_0\cdot \Z_2$ otherwise. Consider the subset
$$\Phi_G^{-1}(p)/\Z_2 \subset \R P^{2D-1}_\Gamma\,.$$
The group $G_0^\tau/\Z_2$ acts on it freely by $\alpha_\Gamma$-preserving diffeomorphisms, and the orbits of the action are tangent to $\ker\alpha_\Gamma$, therefore we have a well-defined $1$-form $\alpha$ on the quotient
$$V:=\frac{\Phi_G^{-1}/\Z_2}{G_0^\tau/\Z_2} = \Phi_G^{-1}(p)/G_0^\tau$$
satisfying
$$\chi^*\alpha = \alpha_\Gamma|_{\Phi_G^{-1}/\Z_2}\,,$$
where $\chi \fc \Phi_G^{-1}(p)/\Z_2 \to V$ is the quotient projection. This is the total space of the prequantization bundle announced in Theorem \ref{thm:main_result}. The projection to $M = \Phi_G^{-1}(p)/G$ is the obvious principal $G/G_0^\tau$-bundle
$$\pi\fc V = \Phi_G^{-1}(p)/G_0^\tau \to \Phi_G^{-1}(p)/G = M\,.$$
The Lie group $G/G_0^\tau$ is compact, connected, and $1$-dimensional, therefore it is a circle. Let $\omega$ denote the symplectic form on $M$ coming from the above reduction procedure, that is it is defined by $\sigma^*\omega = \omega_{\std}|_{\Phi_G^{-1}(p)}$, where $\sigma \fc \Phi_G^{-1}(p) \to M$ is the quotient projection. We then have
\begin{prop}\label{prop:what_constructed_is_preq_bundle} $(V,\alpha,\pi,M,\omega)$ is a prequantization bundle.
\end{prop}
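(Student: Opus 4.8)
The plan is to verify the three defining properties of a prequantization bundle directly from the reduction construction: that $\pi\fc V \to M$ is a principal $S^1$-bundle, that $\alpha$ is an $S^1$-invariant connection form, and that $d\alpha = \pi^*\omega$. First I would observe that the map $\pi$ is the quotient of $\Phi_G^{-1}(p)$ by the compact connected $1$-dimensional group $G/G_0^\tau$, which we already noted is a circle; since $G$ acts freely on the level set $\Phi_G^{-1}(p)$ (this is part of the statement that $p$ is a regular value and the reduction is well-behaved, established in the preceding subsections), the group $G/G_0^\tau$ acts freely on $V = \Phi_G^{-1}(p)/G_0^\tau$, so $\pi$ is a principal $S^1$-bundle. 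Next, the $1$-form $\alpha$ on $V$ was constructed so that $\chi^*\alpha = \alpha_\Gamma|_{\Phi_G^{-1}(p)/\Z_2}$, and the residual $G/G_0^\tau$-action is induced by the $G$-action on $\C^D$, which preserves $\alpha_{\std}$ and hence $\alpha_\Gamma$; therefore $\alpha$ is $S^1$-invariant.

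The two remaining points are that $\alpha$ is a genuine connection form and that $d\alpha = \pi^*\omega$. For the curvature identity I would pull back along the composite submersion $\Phi_G^{-1}(p) \to \Phi_G^{-1}(p)/\Z_2 \xrightarrow{\chi} V$: on $\Phi_G^{-1}(p)$ we have $d(\alpha_{\std}|_{\Phi_G^{-1}(p)}) = \omega_{\std}|_{\Phi_G^{-1}(p)} = \sigma^*\omega$ by definition of $\omega$, while the same pullback of $\pi^*\omega$ equals $\sigma^*\omega$ as well, since $\pi\circ\chi$ (composed with $\Phi_G^{-1}(p)\to\Phi_G^{-1}(p)/\Z_2$) agrees with $\sigma$. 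As the combined projection $\Phi_G^{-1}(p)\to V$ is a surjective submersion, injectivity of pullback of forms gives $d\alpha = \pi^*\omega$ on $V$. Finally, to see that $\alpha$ is a connection form one must check that $\alpha$ evaluates to a fixed nonzero constant on the infinitesimal generator of the $S^1 = G/G_0^\tau$-action, after suitably normalizing the identification of $G/G_0^\tau$ with $S^1$; concretely, a generator of $\Lie(G/G_0^\tau)$ is represented by an element $X \in \mfg$ with $\langle p, X\rangle \neq 0$ (since $\mfg_0 = \ker p$), and for $z \in \Phi_G^{-1}(p)$ one computes $\alpha_{\std,z}(Xz) = \langle \Phi_G(z),X\rangle = \langle p, X\rangle$, a nonzero constant independent of $z$; rescaling $X$ so that its flow closes up after time $2\pi$ and adjusting the normalization makes $\alpha(R) \equiv 1$ for the generating vector field $R$ of the circle action. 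Together with $d\alpha = \pi^*\omega$, this shows $\alpha$ is a contact form (being a connection form on a principal circle bundle over a symplectic base), and hence $(V,\alpha,\pi,M,\omega)$ is a prequantization bundle.

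The main obstacle I anticipate is bookkeeping rather than conceptual: one must carefully track the several quotients (by $\Z_2$, by $G_0$, by $G_0^\tau$, by $G$) and make sure the normalization constant relating the abstract circle $G/G_0^\tau$ to the standard $S^1$ is handled consistently, so that $\alpha(R_\alpha) = 1$ exactly. It is also worth remarking explicitly that the evenness hypothesis on $N$ enters precisely in guaranteeing $-I \in G$, which is what makes the descent to $\R P^{2D-1}_\Gamma$ (rather than merely to a quotient of the sphere) possible; this was used in defining $V$, and no further appeal to it is needed here. Everything else — freeness of the actions, regularity of $p$, the identity $\sigma^*\omega = \omega_{\std}|_{\Phi_G^{-1}(p)}$ — has already been set up in the earlier parts of this subsection, so the proof is essentially an assembly of those facts.
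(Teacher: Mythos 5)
Your proposal is correct and follows essentially the same line as the paper: pull the identity $d\alpha=\pi^*\omega$ back to $\Phi_G^{-1}(p)$ along the quotient submersion and use injectivity of pullback, observe $S^1$-invariance of $\alpha$ from $G$-invariance of $\alpha_\Gamma$, and then verify the connection-form normalization. The only noticeable difference is in the last step: the paper argues that the Reeb field of $\wh\alpha_\Gamma$ on the weighted sphere is tangent to $\Phi_G^{-1}(p)$, comes from the infinitesimal $\mfg$-action, and projects to the generator of the $S^1$-action on $V$, whereas you compute directly from the contact moment map that $\alpha_{\std,z}(Xz)=\langle p,X\rangle$ is a nonzero constant for $X\in\mfg$ with $\langle p,X\rangle\neq 0$, and then normalize. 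Your version is a little more explicit and self-contained on this point, while the paper's phrasing makes the identification of the Reeb field of $\alpha$ with the $S^1$-generator more geometrically transparent; both are complete, and the underlying calculation is the same.
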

\begin{proof}Let us show that $\pi^*\omega = d\alpha$. Both $V$ and $M$ are quotients of the subset
$$\Phi_G^{-1}(p) \subset S^{2D-1}_\Gamma\,.$$
Let $\wh\chi \fc \Phi_G^{-1}(p) \to V$ be the quotient projection and recall the projection $\sigma \fc \Phi_G^{-1}(p) \to M$. Then from the definitions we have
$$\sigma^*\omega = \omega_{\std}|_{\Phi_G^{-1}(p)}\quad\text{and}\quad \wh\chi^*\alpha = \wh\alpha_\Gamma|_{\Phi_G^{-1}(p)}\,.$$
Since $\wh\alpha_\Gamma= \alpha_{\std}|_{S^{2D-1}_\Gamma}$, we have $\wh\chi^*\alpha = \alpha_{\std}|_{\Phi_G^{-1}(p)}$. Since $\sigma = \pi\circ \wh\chi$ and $\omega_{\std} = d\alpha_{\std}$, this means that
$$\wh\chi^*(d\alpha - \pi^*\omega) = d(\wh\chi^*\alpha) - \sigma^*\omega = d(\alpha_{\std}|_{\Phi_G^{-1}(p)}) - \omega_{\std}|_{\Phi_G^{-1}(p)} = 0\,.$$
It follows that $\wh\chi^*(d\alpha-\pi^*\omega) = 0$. Since $\wh\chi$ is a submersion, its differential is surjective everywhere and therefore $\wh\chi^*$ is injective on forms, meaning that $d\alpha = \pi^*\omega$. Since $G$ preserves $\alpha_\Gamma$, $G/G_0^\tau$ preserves $\alpha$, which means that $\alpha$ is an $S^1=G/G_0^\tau$-invariant $1$-form. The Reeb vector field of $\wh\alpha_\Gamma$ on $S^{2D-1}_\Gamma$ is tangent to $\Phi_G^{-1}(p)$, in fact it comes from the infinitesimal action of $\mfg$, therefore it projects to the vector field on $V$ generating the infinitesimal $S^1$-action, which in turn implies that $\alpha$ is nonzero when evaluated on this vector field. This means that $\alpha$ is a connection form on $V$ satisfying $d\alpha = \pi^*\omega$, meaning that $(V,\alpha,\pi,M,\omega)$ is indeed a prequantization bundle.
\end{proof}
\noindent This finishes the construction of the prequantization bundle whose existence is asserted in Theorem \ref{thm:main_result}.

Similarly to what we did for the base $M$, we will need to obtain the total space $V$ via reduction in stages. Recall the central torus $\T \subset G$ and the regular value $\wt c \in \mft^*$ of the moment map $\Phi_\T$. Let $\mft_0 = \ker \wt c \subset \mft$ and let $\T_0 \subset \T$ be the subtorus with Lie algebra $\mft_0$. Since $\mft_0 = \mft\cap\mfsu(D)$, $\T_0$ is well-defined as the identity component of the abelian Lie group $\T \cap \SU(D)$ whose Lie algebra is $\mft\cap\mfsu(D)$. Since $N$ is assumed even, $\Z_2 = \{I,-I\} \subset \Uni(D)$ is contained in $\T$. We let $\T_0^\tau = \T_0\cdot\Z_2 \subset \T$. The group $\T_0^\tau$ acts freely on the manifold
$$\Phi_\T^{-1}(\wt c)/\Z_2 \subset \R P^{2D-1}_\Gamma\,,$$
and the orbits of its action are tangent to $\ker \alpha_\Gamma$. Since $\T$ preserves $\alpha_\Gamma$, it descends to a well-defined $1$-form $\wh\alpha$ on the quotient
$$\wh V:=\frac{\Phi_\T^{-1}(\wt c)/\Z_2}{\T_0^\tau/\Z_2} = \Phi_\T^{-1}(\wt c)/\T_0^\tau\,,$$
which forms a principal $\T/\T_0^\tau$-bundle over the symplectic toric manifold
$$\wh M = \Phi_\T^{-1}(\wt c)/\T = N \times \prod_{i=1}^{r}\C P^{n_ik_i-1}\,.$$
Similarly to Proposition \ref{prop:what_constructed_is_preq_bundle}, we can show that this is a prequantization bundle, and in fact it is proved in \cite{Borman_Zapolsky_Qms_Cont_rigidity}. Next, the group $G/\T_0^\tau$ acts on $\wh V=\Phi_\T^{-1}(\wt c)/\T_0^\tau$ by $\wh\alpha$-preserving contactomorphisms. Its action on the subset
$$\Phi_G^{-1}(p)/\T_0^\tau \subset \wh V$$
is free and the restricted action of $G_0^\tau/\T_0^\tau \subset G/\T_0^\tau$ has orbits tangent to the kernel of $\wh\alpha$, which means that it descends to a well-defined $1$-form on the quotient
$$V = \frac{\Phi_G^{-1}(p)/\T_0^\tau}{G_0^\tau/\T_0^\tau} = \Phi_G^{-1}(p)/G_0^\tau\,.$$
This is exactly the contact form $\alpha$ constructed above. The following diagrams summarize the above constructions in one place. For the base of the prequantization bundle we have
\begin{multline*}
\C^D \hookleftarrow \Phi_\T^{-1}(\wt c) \rightarrow \wh M = \Phi_\T^{-1}(\wt c)/\T = N \times \prod_{i=1}^{r}\C P^{n_ik_i-1} \hookleftarrow \Phi_G^{-1}(p)/\T = N \times\prod_{i=1}^{r}\PV_{k_i}(\C^{n_i})\rightarrow \\ \rightarrow M =\Phi_G^{-1}(p)/G = N \times \prod_{i=1}^{r}\Gr_{k_i}(\C^{n_i})\,,
\end{multline*}
while for the total space we have
$$\R P^{2D-1}_\Gamma\hookleftarrow \Phi_\T^{-1}(\wt c)/\Z_2 \to \wh V = \Phi_\T^{-1}(\wt c)/\T_0^\tau \hookleftarrow \Phi_G^{-1}(p)/\T_0^\tau \to V =\Phi_G^{-1}(p)/G_0^\tau\,.$$

For the construction of the quasi-morphism in Theorem \ref{thm:main_result} we will need the following.
\begin{lemma}\label{lemma:strictly_coiso}
The subset $\Phi_G^{-1}(p)/\T_0^\tau \subset \wh V$ is $\wh\alpha$-strictly coisotropic.
\end{lemma}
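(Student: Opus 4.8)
The plan is to verify the defining condition directly using the linear-algebraic description of all the objects involved. Recall that $\wh V = \Phi_\T^{-1}(\wt c)/\T_0^\tau$ is itself a prequantization bundle over $\wh M = N \times \prod_i \C P^{n_ik_i-1}$, with contact form $\wh\alpha$ and Reeb vector field $R_{\wh\alpha}$ generating the $S^1 = \T/\T_0^\tau$-action. By the remark in Section 2 (following the definition of $\alpha$-strictly coisotropic submanifold), a submanifold $Y \subset \wh V$ is $\wh\alpha$-strictly coisotropic if and only if (a) $R_{\wh\alpha}$ is everywhere tangent to $Y$, and (b) the subbundle $TY \cap \ker\wh\alpha|_Y$ is coisotropic in the symplectic vector bundle $(\ker\wh\alpha|_Y, d\wh\alpha)$. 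So I would check these two properties in turn for $Y = \Phi_G^{-1}(p)/\T_0^\tau$.

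For property (a): the Reeb field $R_{\wh\alpha}$ comes from the infinitesimal $S^1=\T/\T_0^\tau$-action on $\wh V$, which lifts to the infinitesimal $\T$-action on $\Phi_\T^{-1}(\wt c)$; concretely it is generated by the restriction to the unit-weight circle $e^{it}\mapsto(e^{it\gamma_1},\dots,e^{it\gamma_d},e^{it},\dots,e^{it})$ whose image lies in the central torus $\T \subset G$. Since this circle is contained in $G$, it preserves the level set $\Phi_G^{-1}(p)$, hence the corresponding vector field on $\C^D$ is tangent to $\Phi_G^{-1}(p)$, and therefore descends to a vector field on $\Phi_G^{-1}(p)/\T_0^\tau$ tangent to $Y$; this vector field is exactly $R_{\wh\alpha}|_Y$. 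This takes care of (a).

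For property (b): I would work upstairs on $\Phi_G^{-1}(p) \subset \Phi_\T^{-1}(\wt c) \subset S^{2D-1}_\Gamma \subset \C^D$, where $\ker\alpha_{\std}$ at a point $z$ is the $d\alpha_{\std}=\omega_{\std}$-orthogonal complement of $z$ inside $T_z\C^D=\C^D$, and $\omega_{\std}$ restricted there is the ambient symplectic form. The tangent space to $\Phi_G^{-1}(p)$ is $\ker d\Phi_G(z)$, whose $\omega_{\std}$-orthogonal complement (inside $\C^D$) is precisely the tangent space to the $G$-orbit through $z$, i.e.\ $\mfg\cdot z$; this is the standard fact that the annihilator of $\ker d\Phi_G$ is the image of the infinitesimal action (moment map identity). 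Intersecting with $\ker\alpha_{\std,z}$: the $\omega_{\std}$-orthogonal of $T_z\Phi_G^{-1}(p)\cap\ker\alpha_{\std,z}$ inside $\ker\alpha_{\std,z}$ is spanned by the images of $\mfg\cdot z$ and $R_{\alpha_{\std}}$ under orthogonal projection. Because $\T_0^\tau \subset G$, the subtorus orbit $\mft_0\cdot z$ together with $R_{\alpha_{\std}}$ spans the relevant directions killed when passing to the quotient $\wh V$; after dividing by $\T_0^\tau$ the symplectic form on $\ker\wh\alpha$ is the reduction of $\omega_{\std}$, and the image of $T_zY$ is the reduction of $T_z\Phi_G^{-1}(p)$. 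So the $d\wh\alpha$-orthogonal of $TY\cap\ker\wh\alpha|_Y$ is the image of $\mfg\cdot z$ modulo $\mft_0^\tau$, i.e.\ the image of $\mfg_0/\mft_0$-orbit directions, and since $\mft_0^\tau \subset \mfg_0^\tau \subset \mfg$ (so $\mfg\cdot z$ contains $\mft_0\cdot z$), this image is contained in the image of $T_z\Phi_G^{-1}(p)$ itself, giving $(TY\cap\ker\wh\alpha)^{d\wh\alpha} \subset TY\cap\ker\wh\alpha$, which is the coisotropy condition. Equivalently, one observes that $\Phi_G^{-1}(p)$ is cut out from $\Phi_\T^{-1}(\wt c)$ by fixing the value of the moment map $\Phi_{G/\T}$ on $\wh M$ at $0$, so its preimage is coisotropic, and strict coisotropy in the contact sense follows since the extra "$\partial_{\text{Reeb}}$" direction is built in through (a).

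The main obstacle is bookkeeping: carefully tracking the chain of quotients $\C^D \to S^{2D-1}_\Gamma \to \Phi_\T^{-1}(\wt c)/\Z_2 \to \wh V \to \wh M$ and making sure that "$\omega_{\std}$-orthogonal" computed upstairs descends correctly to "$d\wh\alpha$-orthogonal" on $\ker\wh\alpha \subset T\wh V$, and that the group inclusions $\mft_0 \subset \mft \cap \mfsu(D)$, $\mfg_0 = \mfg\cap\mfsu(D)$, $\mft \subset \mfg$ feed through properly so that the $\omega_{\std}$-orthogonal complement of $T_z\Phi_G^{-1}(p)$, namely $\mfg\cdot z$, indeed lands inside $T_z\Phi_G^{-1}(p)$ after quotienting by $\T_0^\tau$. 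A clean way to organize this is: first establish the analogous statement for the symplectic reduction picture (that $\Phi_G^{-1}(p)/\T \subset \wh M$ is a coisotropic submanifold, being a level set of the $G/\T$-moment map), and then lift it through the prequantization bundle $\wh V \to \wh M$, using that $\pi^{-1}$ of a coisotropic submanifold together with tangency of the Reeb field yields an $\wh\alpha$-strictly coisotropic submanifold — a general principle for prequantization bundles that is surely implicit in \cite{Borman_Zapolsky_Qms_Cont_rigidity}.
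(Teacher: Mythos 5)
Your proposal is correct but takes a genuinely different route from the paper. The paper proves a clean, self-contained contact-geometric lemma: if a compact connected group $H$ acts on a contact manifold $(P,\ker\beta)$ by $\beta$-preserving diffeomorphisms with contact moment map $\Phi_H$ and $0$ is a regular value, then $Y=\Phi_H^{-1}(0)$ is $\beta$-strictly coisotropic --- one shows the orbit directions $\mfh(x)$ are isotropic in $(\ker\beta_x,d_x\beta)$, that $T_xY=\mfh(x)^{d\beta}$, and hence $T_xY^{d\beta}=\R R_\beta(x)\oplus\mfh(x)\subset T_xY$ --- and then applies this with $P=\wh V$, $\beta=\wh\alpha$, $H=G_0^\tau/\T_0^\tau=G_0/\T_0$. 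Your argument, especially in the ``clean'' reorganization at the end, instead borrows the familiar symplectic fact: $\Phi_G^{-1}(p)/\T\subset\wh M$ is the zero level set of the reduced moment map $\Phi_{G/\T}$, hence coisotropic, and $Y=\wh\pi^{-1}\big(\Phi_G^{-1}(p)/\T\big)$ is then $\wh\alpha$-strictly coisotropic because $d\wh\pi$ restricts to a fiberwise symplectic isomorphism $\ker\wh\alpha\to T\wh M$ and the Reeb field is tangent to any $S^1$-invariant subset. Both arguments rest on the same underlying principle (orbit directions symplectically annihilate the tangent space of the moment level set), but the paper's is intrinsic to the contact category whereas yours exploits the prequantization-bundle structure and lifts the symplectic statement. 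One small caveat in your middle paragraph: the parenthetical ``(so $\mfg\cdot z$ contains $\mft_0\cdot z$)'' does not actually justify the containment you assert; what you need is $\mfg\cdot z\subset\ker d_z\Phi_G$, which holds because $p$ is fixed by the coadjoint action so $G$ preserves $\Phi_G^{-1}(p)$. This is standard and your clean reorganization sidesteps the issue, but as written that step is a non sequitur.
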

\begin{proof}
One can adapt the argument in \cite[Section 2.2]{Borman_Zapolsky_Qms_Cont_rigidity} to this case. We will present a general argument. Certains parts of it can be found in \cite[Section 7.7]{Geiges_Intro_ct_topology}, where we refer the reader for general background on contact reduction.

Let a compact connected Lie group $H$ with Lie algebra $\mfh$ act on a contact manifold $(P,\eta=\ker\beta)$ with a fixed choice of coorienting contact form, and assume that the action preserves $\beta$. In this case the action is expressible in terms of the associated moment map
$$\Phi_H \fc P \to \mfh^*\,,\quad \langle\Phi_H(x),X\rangle = \beta_x(X(x))\,,$$
where $x \in P$ and where by abuse of notation $X \in \mfh$ also denotes the vector field obtained by the infinitesimal action of $X$ on $P$. We claim that if $0 \in \mfh^*$ is a regular value of $\Phi_H$, then $Y=\Phi_H^{-1}(0)$ is strictly $\beta$-coisotropic. We wish to prove that
$$T_xY^{d\beta} \subset T_xY \quad \text{for all }x \in Y\,.$$
Let $x \in Y$ and consider the action map $a \fc H \to P$, $h \mapsto h\cdot x$. For $X \in \mfh$ we have
$$(a^*\beta)_1(X) = \beta_x(X(x)) = \langle \Phi_H(x),X\rangle = 0\,.$$
Since $\beta$ is $H$-invariant $a$ is $H$-equivariant, $a^*\beta$ is left-invariant, therefore $a^*\beta = 0$. In particular $a^*d\beta = 0$, which means that $\mfh(x):=d_0a(\mfh)$ is an isotropic subspace of $(\eta_x,d_x\beta)$.

We have $T_xY = \ker d_x\Phi_H$. The differential $d_x\Phi_H \fc T_xP \to \mfh^*$ is computed as follows:
$$\langle d_x\Phi_H(u),X\rangle = d_x(\iota_X\beta)(u)=(\underbrace{\cL_X\beta}_{=0} - \iota_X d\beta)_x(u) = d\beta(u,X(x))\,,$$
therefore
$$T_xY = \ker d_x\Phi_H = \{u\in T_xP\,|\, d_x\beta(u,X(x)) = 0\text{ for all }X \in \mfh\} = \mfh(x)^{d\beta}\,.$$
Since $\mfh(x) \subset \eta_x$, $d_x\beta$ is a symplectic form on $\eta_x$, and $\iota_{R_\beta}d\beta = 0$, $T_xY = \mfh(x)^{d\beta}$ is the direct sum of $\R R_\beta(x)$ and the $d_x\beta$-complement of $\mfh(x)$ inside $\eta_x$. It follows that $T_xY^{d\beta}$ is the direct sum of $\R R_\beta(x)$ and the complement of the complement of $\mfh(x)$ inside $\eta_x$, which equals $\mfh(x)$, that is
$$T_xY^{d\beta} = \R R_\beta(x) \oplus\mfh(x)\,.$$
Since $\mfh(x) \subset \eta_x$ is isotropic, it is contained in its own complement, which implies
$$T_xY^{d\beta} \subset \R R_\beta(x) \oplus \mfh(x)^{d\beta}\text{(inside }\eta_x\text) = T_xY\,,$$
as claimed.

In our situation $P$ = $\wh V = \Phi_\T^{-1}(\wt c)/\T_0^\tau$, $\beta = \wh \alpha$, and $H = G_0^\tau/\T_0^\tau = G_0/\T_0$. The moment map
$$\Phi_H \fc \wh V \to \mfh^* = (\mfg_0/\mft_0)^*$$
is induced by the composition of the restriction
$$\Phi_G|_{\Phi_\T^{-1}(\wt c)} \fc \Phi_\T^{-1}(\wt c) \to p + (\mfg/\R)^*$$
with the projection
$$p + (\mfg/\R)^* \to \mfg_0^*\,.$$
We have
$$\Phi_G^{-1}(p)/\T_0^\tau = \Phi_H^{-1}(0)\,,$$
which is $\wh\alpha$-strictly coisotropic as we have just shown.
\end{proof}

\subsection{Constructing the quasi-morphism}\label{ss:constructing_qm}

Here we will use Givental's asymptotic nonlinear Maslov index constructed in \cite{Givental_Nonlinear_gen_Maslov_index}. It is a quasi-morphism \cite{Ben_Simon_Nonlinear_Maslov_index_Calabi_homomorphism}:
$$\mu_{\text{Giv}} \fc \wt\Cont_0(\R P^{2D-1}_\Gamma,\xi_\Gamma) \to \R\,.$$
We refer the reader to \cite{Borman_Zapolsky_Qms_Cont_rigidity} for more background on this point.

In Section \ref{ss:contruction_preq_bdle} above we constructed the prequantization bundle
$$(\wh V,\wh\alpha,\wh\pi,\wh M,\wh \omega)$$
where
$$\wh M=\Phi_\T^{-1}(\wt c)/\T = N \times \prod_{i=1}^{r}\C P^{k_in_i-1}\,,$$
$\wh\omega$ is the symplectic form induced on it from $\C^D$ via symplectic reduction,
$$\wh V = \Phi_\T^{-1}(\wt c)/\T_0^\tau\,,$$
and $\wh\alpha$ is the contact form obtained by pushing $\alpha_\Gamma|_{\Phi_\T^{-1}(\wt c)/\Z_2}$ down to $\wh V$. This prequantization bundle is the one constructed in \cite{Borman_Zapolsky_Qms_Cont_rigidity} where we note that $\wh M$ is an even toric symplectic manifold. The main result of \cite{Borman_Zapolsky_Qms_Cont_rigidity} is an application of Theorem \ref{thm:reduction_thm}, where $\wh V$ is obtained from the real projective space $\R P^{2D-1}_\Gamma$ by contact reduction. Theorem \ref{thm:reduction_thm} then yields a quasi-morphism
$$\wh \mu \fc \wt\Cont_0(\wh V,\wh\xi=\ker\wh\alpha)\to\R\,,$$
by reducing $\mu_{\text{Giv}}$. It is monotone, continuous, and satisfies the vanishing property. The quasi-morphism announced in Theorem \ref{thm:main_result} is obtained from $\wh\mu$ by an additional application of the reduction theorem. Consider the diagram
$$\xymatrix{\wh V = \Phi_\T^{-1}(\wt c)/\T_0^\tau \ar[d]^{\wh \pi} & Y = \Phi_G^{-1}(p)/\T_0^\tau \ar[l]_{\supset} \ar[d]^{\wh \pi|_Y} \ar[r]^{\rho} & V = \Phi_G^{-1}(p)/G_0^\tau \ar[d]^{\pi} \\
\wh M = \Phi_\T^{-1}(\wt c)/\T & X = \Phi_G^{-1}(p)/\T \ar[l]_{\supset} \ar[r]& M = \Phi_G^{-1}(p)/G}$$
The top line is an instance of the geometric setting \eqref{eqn:geom_setting_reduction} in which the reduction theorem applies: $(\wh V,\wh \alpha)$ and $(V,\alpha)$ are closed connected contact manifolds with fixed choices of contact forms, $Y \subset \wh V$ is an $\wh\alpha$-strictly coisotropic submanifold (Lemma \ref{lemma:strictly_coiso}), and $\rho \fc Y \to V$ is a fiber bundle such that $\rho^*\alpha = \wh\alpha|_Y$. The missing crucial ingredient is to show that $Y$ is $\wh\mu$-subheavy. It is at this point that we must use our remaining assumption, namely that the base of the prequantization bundle has the special form
$$M = N \times \prod_{i=1}^{r}\Gr_2(\C^{2n_i})\,,$$
that is we only have \emph{Grassmannians of $2$-planes in even-dimensional complex spaces}. Let us assume this until the end of the proof. For this special case we obtain
$$\wh M = N \times \prod_{i=1}^{r}\C P^{4n_i-1}\,.$$

Let $\wh\zeta$ be the quasi-state on $\wh M$ induced from $\wh\mu$ by the formula \eqref{eqn:qs_from_qm}. Since $Y = \wh\pi^{-1}(X)$ and thanks to the relationship between $\wh\zeta$-superheaviness in $\wh M$ and $\wh\mu$-superheaviness in $\wh V$, see Proposition \ref{prop:relation_superheavy_preq_bdle_total_sp_base}, in order to show that $Y$ is $\wh\mu$-subheavy, it suffices to show that $X$ is $\wh\zeta$-superheavy. It is enough to show in fact that it contains a $\wh\zeta$-superheavy subset, see Proposition \ref{prop:properties_rigid_subsets_ct_mfds}, which is what we are going to do.

Theorem \ref{thm:rigidity_descends_by_reduction} says that under reduction sub- and superheaviness are preserved. As we described above, $\wh V$ is obtained from $\R P^{2D-1}_\Gamma$ by contact reduction, using the diagram
$$\R P^{2D-1}_\Gamma\supset \Phi_\T^{-1}(\wt c)/\Z_2 \xrightarrow{\theta} \wh V = \Phi_\T^{-1}(\wt c)/\T_0^\tau\,.$$
Since the quasi-morphism $\wh\mu$ is the reduction of $\mu_{\text{Giv}}$ on $\R P^{2D-1}_\Gamma$, $\mu_{\text{Giv}}$-sub- and superheavy sets reduce to $\wh\mu$-sub- and superheavy sets in $V$, respectively. In \cite{Borman_Zapolsky_Qms_Cont_rigidity} it was shown that the real part $\R P^{D-1} = S^{D-1}/\Z_2 \subset \R P^{2D-1}_\Gamma$, where $S^{D-1} = S^{2D-1}_\Gamma\cap \R^D \oplus 0 \subset \C^D$, which is a Legendrian submanifold, is $\mu_{\text{Giv}}$-subheavy. Therefore its reduction
$$L:=\theta(\R P^{D-1} \cap \Phi_\T^{-1}(\wt c)/\Z_2) \subset \wh V$$
is $\wh\mu$-subheavy. Thanks to Proposition \ref{prop:relation_superheavy_preq_bdle_total_sp_base}, the projection $K:=\wh\pi(L) \subset \wh M$ is $\wh \zeta$-superheavy. In \cite{Borman_Zapolsky_Qms_Cont_rigidity} it was shown that $K$ is the real part of the toric manifold $\wh M$, that is
$$K = N_\R \times \prod_{i=1}^{r}\R P^{4n_i-1}\,,$$
where $N_\R \subset N$ is the real part of $N$. The real part of a toric manifold is the fixed point set of the unique antisymplectic involution which anticommutes with the torus action. Since the collection of $\wh\zeta$-superheavy sets is invariant by the Hamiltonian group of $\wh M$, it now suffices to find a Hamiltonian diffeomorphism mapping $K$ into $X$. Recall that
$$X = N \times \prod_{i=1}^{r}\PV_2(\C^{2n_i})\,.$$
It is therefore enough to find a Hamiltonian diffeomorphism of $\C P^{4n-1}$ mapping its real part $\R P^{4n-1}$ into $\PV_2(\C^{2n})$. This is the part where our assumption that we are dealing with Grassmannians of $2$-planes in even-dimensional spaces comes to light.

Identify the quaternions $\HH$ with $\C^2$ by means of $\C^2 \to \HH$, $(z,w) \mapsto z+wj$. Also identify $\HH = \R^4$ by means of coordinates. Now consider the map
\begin{equation}\label{eqn:the_map_b}
b\fc \R^4 = \HH \to \C^2 \times \C^2= \HH \times \HH\,,\quad q \mapsto \tfrac 1 {\sqrt 2}(q,jq)\,.
\end{equation}

Writing $q = z+wj$, this translates into
$$(z,w)\mapsto \tfrac 1 {\sqrt 2}(z,w;-\ol w,\ol z)\,.$$
The main features of this map are (i) it transforms the Euclidean inner product on $\R^4$ into the Hermitian inner product on $\C^4$, namely
$$(b(q),b(q')) = \langle q, q'\rangle\,,$$
and (ii) its image is contained in the subvariety of Hermitian orthogonal pairs of vectors in $\C^2$. Taking a product of $n$ copies of this map, we obtain
$$\R^{4n} = \HH^n \to \C^{2n} \times \C^{2n} = \HH^n \times \HH^n\,,\quad (q_1,\dots,q_n)\mapsto \tfrac 1 {\sqrt 2}(q_1,\dots,q_n;jq_1,\dots,jq_n)\,,$$
which shares the same properties, namely it preserves the inner products and its image is contained in the set of Hermitian orthogonal pairs of vectors in $\C^{2n}$. Extending by $\C$-linearity, we obtain a map
\begin{equation}\label{eqn:unitary_extension_of_b}
b_\C \fc \C^{4n} = \R^{4n} \otimes_\R\C \to \C^{2n} \times \C^{2n} = \C^{4n}\,,
\end{equation}
which explicitly can be written as follows:
$$b_\C \fc \HH^n \times \HH^n \to \HH^n \times \HH^n\,, \quad b_\C(q,q') = \tfrac 1 {\sqrt 2}(q+iq',jq+kq')\,,$$
where in the first product $\HH^n \times \HH^n$ the first factor is identified with $\R^{4n}$ while the second factor is identified with $i\R^{4n}$ in the decomposition $\R^{4n} \otimes \C = \R^{4n} \oplus i\R^{4n}$. Property (i) above means that this extension is unitary. Moreover, it maps the real unit sphere $S^{4n-1} \subset \R^{4n} \oplus 0 \subset \C^{4n}$ into the Stiefel variety $\Stie_2(\C^{2n}) \subset \C^{2n} \times \C^{2n}$. Let us denote the map induced by $b_\C$ on $\C P^{4n-1}$ by $B \in \PU(4n)$. It follows that
$$B(\R P^{4n-1}) \subset \PV_2(\C^{2n})\,.$$
Since the projective unitary group is contained in the Hamiltonian group, we found a Hamiltonian diffeomorphism mapping the Lagrangian $\R P^{4n-1}$ into the projective Stiefel variety $\PV_2(\C^{2n})$. Taking a product of a number of copies of this map, we have therefore found a Hamiltonian diffeomorphism of $\wh M$ mapping $K$ into $X$, proving that $X$ is $\wh\zeta$-superheavy, and consequently that $Y$ is $\wh\mu$-superheavy, in particular $\wh\mu$-subheavy (Proposition \ref{prop:properties_rigid_subsets_ct_mfds}), which allows us to invoke the reduction theorem (Theorem \ref{thm:reduction_thm}) to obtain a quasi-morphism
$$\mu\fc\wt\Cont_0(V,\xi)\to\R\,.$$
The reduction theorem also shows that $\mu$ is monotone, continuous, and has the vanishing property. This completes the proof of Theorem \ref{thm:main_result}.

\subsection{Rigidity results}\label{ss:rigidity_results}

Here we prove Theorem \ref{thm:rigidity_result}. The main objective is to construct the Legendrians $L_1,L_2$ and to prove that they are subheavy. Recall that Theorem \ref{thm:reduction_thm} allows us to induce quasi-morphisms through the procedure of contact reduction. Let us describe how the prequantization bundle $\pi \fc V \to M$ appearing in Theorem \ref{thm:rigidity_result} is constructed. Keeping the notations of Section \ref{ss:contruction_preq_bdle}, this is the special case when $N = \pt$, $r=1$. The base $\Gr_2(\C^{2n})$ of the prequantization bundle is constructed as follows. The group $\Uni(2)$ acts on $\C^{4n}=(\C^{2n})^2$ via the moment map $\Phi_{\Uni(2)}\fc (\C^{2n})^2 \to \mfu(2)^*$. The value $p = -in\tr \in \mfu(2)^*$ is regular, and the corresponding level set of the moment map is
$$\Phi_{\Uni(2)}^{-1}(p) = \{(z,w)\in(\C^{2n})^2)\,|\,\|z\|^2 = \|w\|^2 = 2n\,,(z,w) = 0\} \simeq \Stie_2(\C^{2n})\,,$$
where the identification sends $(z,w)\mapsto (\frac{z}{\sqrt{2n}},\frac{w}{\sqrt{2n}})$. We will identify the two manifolds by this map in what follows. The base $M$ is then the quotient
$$M= \Phi_{\Uni(2)}^{-1}(p)/\Uni(2) = \Stie_2(\C^{2n})/\Uni(2) = \Gr_2(\C^{2n})\,.$$

As for total space, first we need to identify the group $G_0 \subset G = \Uni(2)$. Since $\ker p = \mfsu(2) \subset \mfu(2)$, it follows that $G_0 = \SU(2)$, and since $-I \in \SU(2)$, we have $G_0^\tau = G_0$. Thus the total space $V$ of the prequantization bundle is
$$V = \frac{\Stie_2(\C^{2n})/\Z_2}{\SU(2)/\Z_2} = \Stie_2(\C^{2n})/\SU(2)\,,$$
and the projection $\pi \fc V \to M$ is
$$V = \Stie_2(\C^{2n})/\SU(2) \xrightarrow{\Uni(2)/\SU(2)=S^1} \Stie_2(\C^{2n})/\Uni(2) = M\,,$$
The contact manifold $(V,\xi = \ker \alpha)$ is obtained from $\R P^{8n-1}$ by contact reduction, as follows. The level set $\Phi_{\Uni(2)}^{-1}(p) = \Stie_2(\C^{2n})$ is contained in the sphere $S^{8n-1} \subset \C^{4n}$. Note that this is the standard round sphere (up to a change of radius), since all the components of the weight vector $\Gamma$ are $1$. Quotienting out $\Z_2$, we get the following diagram:
\begin{equation}\label{eqn:diagram_reduction_RP_to_Grassmannian}
\R P^{8n-1} \supset \Stie_2(\C^{2n})/\Z_2 \xrightarrow{\chi}\Stie_2(\C^{2n})/\SU(2) = V\,.
\end{equation}
Note that for the reduction in stages described in Section \ref{sss:constructing_total_space}, we have the following: $\T \subset \Uni(2)$ is the central circle, $\T_0$ is the trivial subgroup, $\T_0^\tau = \Z_2$. It follows that\
$$\Phi_\T^{-1}(\wt c)/\Z_2 = \wh V = \Phi_\T^{-1}/\T_0^\tau = \R P^{8n-1}\,,$$
and therefore the first stage is trivial and we get $\wh\mu = \mu_{\text{Giv}}$.

The quasi-morphism $\mu \fc \wt\Cont_0(V,\xi) \to \R$ is the reduction of $\mu_{\text{Giv}} \fc \wt\Cont_0(\R P^{8n-1}) \to \R$ by means of diagram \eqref{eqn:diagram_reduction_RP_to_Grassmannian}, as described in Theorem \ref{thm:reduction_thm}. Theorem \ref{thm:rigidity_descends_by_reduction} says that the property of being $\mu_{\text{Giv}}$-sub- or superheavy is inherited through reduction, that is if $Z \subset \R P^{8n-1}$ is $\mu_{\text{Giv}}$-sub- or superheavy then its reduction, which is $\chi(Z \cap \Stie_2(\C^{2n})/\Z_2)) \subset V$, is $\mu$-sub- or superheavy. We will describe $L_1,L_2 \subset V$ as reductions of subheavy subsets in $\R P^{8n-1}$, implying that they are $\mu$-subheavy.

The Legendrian $L_1$ is the reduction of the real part $\R P^{4n-1} \subset \R P^{8n-1}$:
$$L_1 = \chi(\R P^{4n-1} \cap \Stie_2(\C^{2n})/\Z_2)\,.$$
Let us compute what it is. Equivalently $L_1$ can be obtained as follows. Consider the diagram
$$S^{8n-1} \supset \Stie_2(\C^{2n}) \xrightarrow{\wh\chi}\Stie_2(\C^{2n})/\SU(2) = V$$
covering the diagram \eqref{eqn:diagram_reduction_RP_to_Grassmannian}. Then $L_1 = \wh\chi(S^{4n-1} \cap \Stie_2(\C^{2n}))$, as is easy to see. The intersection $S^{4n-1} \cap \Stie_2(\C^{2n})$ is just the real part of $\Stie_2(\C^{2n})$, which is the Stiefel variety $\Stie_2(\R^{2n})$ of Euclidean orthonormal real $2$-frames in $\R^{2n}$. The projection $\wh\chi$ identifies points on this variety related by the action of $\SU(2)$. It is easy to see that two points on it are related by $A \in \SU(2)$ if and only if $A$ has real entries, meaning $A \in \SO(2)\subset\SU(2)$, which means that $L_1 = \Stie_2(\R^{2n})/\SO(2) = \Gr_2^+(\R^{2n})$, the Grassmannian of oriented real $2$-planes in $\R^{2n}$.

In order to construct $L_2$, we need to recall the map $b \fc \R^{4n} \to \C^{4n}$, see equation \eqref{eqn:the_map_b}. Identifying $\R^{4n} = \HH^n$ and $\C^{4n} = \HH^n \times \HH^n$, it is given by $b(q) = \frac 1 {\sqrt{2}}(q,jq)$. The group $\SU(2)$, identified with the unit sphere in $\HH$, acts on $\HH^n$ and $\HH^n\times\HH^n$ by multiplication on the right, and $b$ is equivariant with respect to this action. Note that this action on $\HH^n \times \HH^n$ and the action on $(\C^{2n})^2$, viewed as the set of complex matrices of order $2n\times 2$, are the same. Since the action of $\SU(2)$ on $\HH^n = \R^{4n}$ preserves the sphere $S^{4n-1}$, it preserves the image $b(S^{4n-1})\subset \HH^n \times \HH^n$. We then have
$$L_2 = \chi\big(b(S^{4n-1})/\Z_2\cap \Stie_2(\C^{2n})/\Z_2)\big)\,.$$
Alternatively, $L_2 = \wh\chi(b(S^{4n-1})\cap \Stie_2(\C^{2n}))$. Above we showed that $b(S^{4n-1}) \subset \Stie_2(\C^{2n})$. Since $\wh\chi$ is the quotient by $\SU(2)$, we see that $L_2 = b(S^{4n-1})/\SU(2)$, which, since $b$ is $\SU(2)$-equivariant, is diffeomorphic to $S^{4n-1}/\SU(2)$. The action of $\SU(2)$ on $S^{4n-1}$ is precisely the one appearing in the higher Hopf fibration
$$S^{4n-1}\xrightarrow{\SU(2)}\HH P^{n-1}\,,$$
which means that $L_2$ is diffeomorphic to the quaternionic projective space $\HH P^{n-1}$.

The real Legendrian $\R P^{4n-1} \subset \R P^{8n-1}$ is $\mu_{\text{Giv}}$-subheavy by \cite[Lemma 1.23]{Borman_Zapolsky_Qms_Cont_rigidity}. Let us show that $b(S^{4n-1})/\Z_2$ is also subheavy. Recall the unitary map $b_\C$, see \eqref{eqn:unitary_extension_of_b}. Let us denote by $\ol B$ the map it induces on $\R P^{8n-1}$. Since $\ol B$ is induced by a unitary map, it is a contactomorphism belonging to $\Cont_0(\R P^{8n-1})$. Moreover, $b(S^{4n-1})/\Z_2 = \ol B(\R P^{4n-1})$, and since subheaviness is preserved by elements of $\Cont_0$, we see that $b(S^{4n-1})/\Z_2$ is likewise $\mu_{\text{Giv}}$-subheavy. It follows that $L_1,L_2$ are subheavy.

We can now prove Theorem \ref{thm:rigidity_result}. Items (i-iii): that $L_i$ is Legendrian follows from the fact that contact reduction preserves the Legendrian property, in analogy with the fact that symplectic reduction preserves the Lagrangian property. One checks that $K_1=\pi(L_1)$ is $\Gr_2(\R^{2n})$ and that $\pi|_{L_1}$ is the canonical double cover, and that $\pi|_{L_2}\fc L_2 \to K_2$ is a diffeomorphism. Since $\pi|_{L_i} \fc L_i \to K_i$ are covering maps and $\pi$ is the projection of a prequantization bundle, it follows that $K_i$ is Lagrangian.

Since $L_i$ is $\mu$-subheavy, $K_i = \pi(L_i)$ is $\zeta$-superheavy, as are its Hamiltonian images, and thanks to Proposition \ref{prop:relation_superheavy_preq_bdle_total_sp_base} and the fact that the collection of $\zeta$-superheavy sets is invariant by the action of the Hamiltonian group, $K_i$ must intersect any Hamiltonian image of $K_j$, proving item (iv). For item (v) we note that $Q_i$ is $\mu$-subheavy, since it contains $L_i$, and since it is invariant by the Reeb flow, \cite[Proposition 1.13]{Borman_Zapolsky_Qms_Cont_rigidity} implies that it is superheavy. Therefore $Q_i$ must intersect $\psi(L_j)$ for any $\psi \in \Cont_0(V,\xi)$ by Proposition \ref{prop:properties_rigid_subsets_ct_mfds}. The proof of Theorem \ref{thm:rigidity_result} is complete.

\bibliography{biblio}
\bibliographystyle{plain}

\end{document}